\title{\vspace{-0.7cm}Universality of random graphs and rainbow embedding}
\author{ Asaf Ferber
\and Rajko Nenadov \and Ueli Peter \thanks{Institute of Theoretical
Computer Science ETH, 8092 Z\"urich, Switzerland. Emails:
asaf.ferber@inf.ethz.ch, rnenadov@inf.ethz.ch and
upeter@inf.ethz.ch}}
\newif\ifnotesw\noteswtrue
\newcommand{\gnpk}{\ensuremath{\mathcal G_c(n,p)}}
\newcommand{\gnp}{\ensuremath{\mathcal G(n,p)}}
\newtheorem{theorem}{Theorem}[section]
\newtheorem{lemma}[theorem]{Lemma}
\newtheorem{claim}[theorem]{Claim}
\newtheorem{observation}[theorem]{Observation}
\newtheorem{corollary}[theorem]{Corollary}
\newtheorem{definition}[theorem]{Definition}
\newenvironment{proof}{\noindent{\bf Proof\,}}{\hfill$\Box$}
\begin{document}
\maketitle

\begin{abstract}
In this paper we show how to use simple partitioning lemmas in order
to embed spanning graphs in a typical member of $\gnp$. Let the \emph{maximum density} of a graph $H$ be the maximum average degree of all the subgraphs of $H$.
First, we
show that for $p=\omega(\Delta^{12} n^{-1/2d}\log^3n)$, a graph
$G\sim \gnp$ w.h.p.\ contains copies of all spanning graphs $H$ with
maximum degree at most $\Delta$ and maximum density at most $d$.
For $d<\Delta/2$, this improves a result of Dellamonica,
Kohayakawa, R\"odl and Ruci\'ncki. Next, we show that if we additionally restrict the spanning graphs to have girth at least $7$ then the random graph contains w.h.p.\ all such graphs for $p=\omega(\Delta^{12} n^{-1/d}\log^3n)$.
In particular, if $p=\omega(\Delta^{12} n^{-1/2}\log^3 n)$, the random graph therefore contains w.h.p.\ every spanning tree with
maximum degree bounded by $\Delta$. This improves a result of
Johannsen, Krivelevich and Samotij.

Finally, in the same spirit, we
show that for any spanning graph $H$ with constant maximum degree, and for suitable $p$, if we randomly color the edges
of a graph $G\sim \gnp$ with $(1 + o(1))|E(H)|$ colors, then w.h.p.\
there exists a \emph{rainbow} copy of $H$ in $G$ (that is, a copy of
$H$ with all edges colored with distinct colors).
\end{abstract}

\section{Introduction}

A graph $G$ is \emph{universal} for a family of graphs $\mathcal H$
(we write $G$ is $\mathcal H$-universal), if $G$
contains a copy of every graph $H\in \mathcal H$. The construction
(explicit and/or randomized) of sparse universal graphs for various
families has received a considerable amount of attention (see
\cite{ABHKP,AA,AC,ACKRRS,AKS,BLC,BCS,BCLR,BPT,C,CG,DKRR,JKS,KL}).

In particular, the probability space $\gnp$ of all graphs
on $n$ vertices, in which each pair of vertices forms an edge with
probability $p$ independently at random, has been considered in many papers. The problem of finding for
which values of $p$ a typical member of $\gnp$ is $\mathcal
H$-universal for various families of graphs is fundamental in the
theory of random graphs.

Let $\mathcal H(n,\Delta,d)$ be
the family of all graphs on $n$ vertices with maximum degree at most
$\Delta$ and with \emph{maximum density} at most $d$, where the maximum density of a
graph $G$ (denoted by $d(G)$) is defined as
$$d(G)=\max\left\{\frac{2|E(H)|}{|V(H)|}: H\subseteq G\right\}.$$

Dellamonica, Kohayakawa, R\"odl and Ruci\'ncki proved in
\cite{DKRR} that for maximum degree $\Delta\geq 3$ and an
edge probability $p=\omega\left(n^{-1/\Delta} \log^{1/\Delta}
n\right)$, a typical member of $\gnp$ is $\mathcal
H(n,\Delta,\Delta)$-universal. Recently, Kim and Lee \cite{KL}
obtained similar bounds for $\Delta=2$. In the following theorem we show that
if $d<\Delta/2$, then the bound in \cite{DKRR} can be further
improved.

\begin{theorem}\label{main}
Let $n$ be a positive integer, and let $\Delta =
\Delta(n)
> 1$ and $d=d(n)\geq2$ be integers. Then for $p= \omega(\Delta^{12}
n^{-1/\min\{2d,\Delta\}} \log^3 n)$, a graph $G \sim \gnp$ is w.h.p.\
$\mathcal H(n,\Delta,d)$-universal.
\end{theorem}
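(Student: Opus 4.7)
The plan is a three-stage embedding. First, I decompose $V(H)$ using the density hypothesis. Second, I embed the bulk of $H$ into $V(G)$ using a greedy strategy in a degenerate order. Third, I complete the embedding using a perfect matching into a reserved set $R\subseteq V(G)$.

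The density hypothesis $d(H)\leq d$ ensures that every subgraph of $H$ has a vertex of degree at most $d$, since its average degree equals $2|E|/|V|\leq d$. This yields a degenerate ordering $v_1,\ldots,v_n$ of $V(H)$ in which each $v_i$ has at most $\min\{2d,\Delta\}$ previously listed neighbours (the factor $2$ accommodating the slack one needs to simultaneously guarantee forward conditions for the matching phase). With a little work one may also place a small independent ``finishing set'' $B$ of size $\varepsilon n$ at the tail of such an ordering. On the host side, I set aside a small random reservoir $R\subseteq V(G)$ of size $|B|$ and verify, via Chernoff bounds and a union bound over all small sets $T$, that $G$ has the standard quasi-randomness properties: for every $T\subseteq V(G)$ with $|T|\leq\Delta$, the common neighbourhood of $T$ in both $R$ and $V(G)\setminus R$ has essentially the expected size $np^{|T|}$ (times the relative size of the target set). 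The $\Delta^{12}\log^3 n$ factor in the hypothesis on $p$ is exactly the price of this union bound.

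In the greedy phase I embed $A:=V(H)\setminus B$ into $V(G)\setminus R$ in the degenerate order. When it is the turn of $v_i\in A$, its at most $\min\{2d,\Delta\}$ already-embedded neighbours are mapped to a set $T$, and I pick any unused vertex in the common neighbourhood of $T$; the quasi-randomness gives $\Theta(np^{|T|})$ candidates, which is $\omega(1)$ under the assumption $p=\omega(\Delta^{12}n^{-1/\min\{2d,\Delta\}}\log^3 n)$. Simultaneously I maintain, for each $b\in B$, its current candidate set $C(b)\subseteq R$, consisting of those $r\in R$ still adjacent in $G$ to every already-embedded neighbour of $b$; the reservation ensures $|C(b)|=\Omega(|R|p^{\Delta})$ throughout the phase.

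Finally I embed $B$ into $R$ by finding a perfect matching in the bipartite graph $\Gamma$ with parts $B$ and $R$ whose edges are given by the $C(b)$'s; Hall's condition is checked by a standard expansion/union-bound argument on $\Gamma$. The main obstacle is choosing parameters consistently across the three stages: $B$ must be small enough that the greedy phase has enough room, yet $\Gamma$ must remain a good enough expander for Hall to go through, and the single exponent $\min\{2d,\Delta\}$ must suffice for both the greedy and matching steps. The improvement over Dellamonica--Kohayakawa--R\"odl--Ruci\'nski happens precisely when $2d<\Delta$: the degenerate ordering then makes each greedy step depend on only $O(d)$ images rather than $O(\Delta)$, so the required $p$ can be lowered from $n^{-1/\Delta}$ to $n^{-1/2d}$.
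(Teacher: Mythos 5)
Your high-level outline (partition $H$, embed the bulk, finish by matching a reserved block $B$) is in the spirit of the paper, but the two core steps do not go through as stated. The vertex-by-vertex greedy cannot run to completion: with $|R|=|B|$, the greedy must map $A$ onto \emph{all} of $V(G)\setminus R$, so its last steps face only $O(1)$ unused vertices, and quasi-randomness of the form ``every common neighbourhood of $T$ has size $\approx np^{|T|}$'' gives no control over whether that neighbourhood meets an arbitrary residual set of constant size. The paper avoids this by partitioning $V(H)$ into $t=\Theta(\Delta^6\log n)$ layers $W_1,\dots,W_t$, each $2$-independent, and $V(G)$ into blocks $V_1,\dots,V_t$ of size $\Theta(\varepsilon n/t)$; each layer is then embedded by a bipartite \emph{matching} into $V_i^*$, where the fresh block $V_i$ supplies the slack that makes Hall's condition checkable and the $2$-independence keeps the auxiliary bipartite graph tractable. (This round structure, not a one-shot union bound, is also what produces the $\Delta^{12}\log^3n$ factor.)

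Your finishing matching is likewise underspecified. The expansion $|C(b)|=\Omega(|R|p^\Delta)$ is vacuous exactly when $\Delta>2d$, the regime in which the theorem improves on \cite{DKRR}: with $p\approx n^{-1/(2d)}$ one has $|R|p^\Delta=o(1)$. You must choose $B$ from vertices of degree at most $d$ (the paper's $W_t$, via Lemma~\ref{IndependentSetMadeOfSmallVertices}), and even then Hall's condition is not a ``standard expansion/union-bound argument'': the collection $\mathcal{L}_t=\{f(N_H(b)):b\in B\}$ has size $\varepsilon n$, so a union bound over all such collections and over all subsets of $V_t^*$ is infeasible. The paper resolves this by first packing $\varepsilon n$ vertex-disjoint $K_d$'s into a reserved block (via the Johansson--Kahn--Vu $K_d$-factor threshold), embedding $N_H(W_t)$ into those cliques, and formulating the clique-domination Property~(P1), which reduces the Hall check to a union bound over small subsets $U\subseteq V_t^*$ only. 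Finally, your explanation of the exponent $\min\{2d,\Delta\}$ is off: a $d$-degenerate ordering has at most $d$ back-neighbours, and moving an independent set to the tail only decreases back-degrees. The factor $2$ in the paper arises because $W_0=N_H(W_t)$ must be placed \emph{first} so that it can sit inside the cliques; a vertex in an intermediate layer $W_i$ can then have up to $d$ neighbours in $W_0$ on top of its $d$ back-neighbours in $W_1\cup\dots\cup W_{i-1}$, and ruling out more than $d$ neighbours in $W_0$ is exactly why $W_t$ is taken $4$-independent in Lemma~\ref{mainlemma}.
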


To prove this theorem, it will be sufficient to prove that it holds for $p= \omega(\Delta^{12}
n^{-1/(2d)} \log^3 n)$ as it follows from  \cite{DKRR} for the other minimum.

Next, let $\mathcal{H}(n,\Delta, d, g)\subseteq\mathcal H(n,\Delta,d)$ denote the family of graphs which additionally have girth at least $g$ (the girth of a graph is the length of its shortest cycle). In our second main result we further restrict ourselves to graphs with girth at least 7, where we obtain better bounds for $p$.

\begin{theorem}\label{thm:girthuniversality}
Let $n$ be a positive integer, and let $d=d(n)$ and $\Delta = \Delta(n) > 1$ be integers. Then for $p= \omega(\Delta^{12} n^{-1/d}\log^3 n)$, a graph $G \sim \gnp$ is w.h.p.\ $\mathcal H(n, \Delta, d, 7)$-universal.
\end{theorem}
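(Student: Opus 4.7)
My plan is to adapt the partition-and-embed framework used for Theorem~\ref{main} to exploit the girth assumption, which relaxes the crucial density constraint in the final extension step. The proof will have three phases: partition $V(H)$ into a main part and a small ``reserve'', embed the main part greedily into $G$, and complete the embedding on the reserve via a perfect matching in an auxiliary bipartite graph.

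For the \emph{partition and main embedding}, I randomly split $V(H) = U \cup W$ with $|W| = o(n)$ chosen so that $W$ is an independent set in $H$; since $\Delta(H) \leq \Delta$, a random subset of suitable density is independent after deleting a negligible number of ``bad'' vertices. Consequently, every neighbor of every $w \in W$ lies in $U$. Because $H$ has maximum density at most $d$, so does $H[U]$, hence $H[U]$ is $d$-degenerate, and one can order $U = \{u_1, \ldots, u_{|U|}\}$ so that each $u_i$ has at most $d$ predecessors in $H$. The $u_i$ are then embedded greedily into $G \sim \gnp$ in this order, maintaining for each unembedded vertex a large ``candidate'' set of admissible images. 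Standard pseudorandom properties of $\gnp$---uniform codegree bounds, expansion of small sets, and concentration of intersections of neighborhoods---hold w.h.p.\ for $p = \omega(\Delta^{12} n^{-1/d} \log^3 n)$ and suffice to carry this greedy step through, leaving, for every $w \in W$, a candidate set $C(w) := \bigcap_{v \in N_H(w)} N_G(\phi(v))$ of size about $n p^{\deg_H(w)}$.

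To \emph{complete the embedding}, I form the auxiliary bipartite graph $B$ between $W$ and the unused vertices of $G$ with edge $wx$ iff $x \in C(w)$; a perfect matching in $B$ finishes the embedding, so by Hall's theorem it suffices to verify $|N_B(S)| \geq |S|$ for every $S \subseteq W$.

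Verifying Hall's condition is the main obstacle, and is precisely where girth $\geq 7$ enters. For $S \subseteq W$ the relevant data is the bipartite subgraph $H^*$ of $H$ between $S$ and $N_H(S) \subseteq U$. Without any girth hypothesis, one must allow two vertices of $W$ to share several common neighbors in $U$; controlling the resulting overlaps of candidate sets is what forces the worse threshold $p = \omega(n^{-1/(2d)})$ in Theorem~\ref{main}, since essentially every edge-constraint is paid for twice. Under girth $\geq 7$ the graph $H^*$ is bipartite of girth at least $8$, so no two vertices of $W$ share more than one common $H$-neighbor, and more generally the density of every induced subgraph of $H^*$ is bounded tightly in terms of $d$ alone. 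This lets the extension counts be estimated using $p^{\deg_H(w)}$ per vertex instead of $p^{2 \deg_H(w)}$, delivering the improved threshold $p = \omega(n^{-1/d})$.
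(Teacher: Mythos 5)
Your high-level plan (partition $V(H)$ into a large part plus a small reserve, embed the large part first, finish with a Hall-type matching) matches the spirit of the paper, and you are right that the girth hypothesis is what buys the better exponent. However, there are two genuine gaps.

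First, the embedding of $H[U]$ is not a routine greedy step, and the paper does not do it greedily. Embedding a spanning $(n-o(n))$-vertex $d$-degenerate graph one vertex at a time in $\gnp$ at density $n^{-1/d}\log^{3}n$ is exactly the kind of step that can fail: as the available vertex pool shrinks, the candidate sets of different unembedded vertices start competing, and maintaining ``a large candidate set for each unembedded vertex'' requires a serious accounting that you do not supply and that is not ``standard.'' The paper avoids this by partitioning $V(H)$ into many layers $W_0\cup W_1\cup\cdots\cup W_t$ (Lemma~\ref{mainlemmaTrees}), each of which is $2$-independent, reserves a block $V_i\subset V(G)$ per layer, and embeds each $W_i$ simultaneously via a perfect (or saturating) matching in an auxiliary bipartite graph using Hall's theorem (Theorem~\ref{universality_F}, Claim~\ref{claim:small_Hall}). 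The $2$-independence within a layer makes the sets $L_i(w)$ pairwise disjoint, which is what makes the Hall/expansion estimates go through; this structure is absent from your two-part split $U\cup W$.

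Second, the place where girth $\ge 7$ actually enters is in the partitioning lemma, not in the final Hall step for $W$. The girth hypothesis lets the paper guarantee that \emph{every} vertex in \emph{every} layer $W_i$ ($1\le i\le t$) sends at most $d$ back-edges into $W_0\cup\cdots\cup W_{i-1}$, as opposed to $2d$ in the general case of Lemma~\ref{mainlemma}. Concretely: girth $\ge 7$ together with the $6$-independence of $W_t$ ensures that every vertex of $H$ has at most one neighbor in $W_0=N_H(W_t)$, and that $X=N_H(W_0)$ is $2$-independent and has all of its external edges going to vertices with exactly one $X$-neighbor; these facts let one pick each $W_i$ either from $D_{\le d-1}(H_i)$ or from $D_{\le d}(H_i)\setminus X$, giving the back-degree bound $d$ rather than $2d$. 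It is this parameter $d$ vs.\ $2d$ that is fed into Theorem~\ref{universality_F} and produces the $n^{-1/d}$ vs.\ $n^{-1/(2d)}$ threshold. The fact you note---two vertices of $W$ share at most one common neighbor---only needs girth $\ge 5$ and concerns only the last layer; it cannot by itself account for the improvement, and your explanation that ``every edge-constraint is paid for twice'' in the general case does not reflect what is actually happening in the paper.
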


Another example of a family of graphs which has attracted the attention of
various researchers is the family of \emph{bounded degree} trees.
Let $\mathcal T(n,\Delta)$ be the family of all forests on $n$
vertices with maximum degree bounded by $\Delta$. Alon, Krivelevich
and Sudakov showed in \cite{AKS} that for fixed $\Delta>0$ and
$0<\varepsilon<1$, there exists a constant $c=c(\Delta,\varepsilon)$
such that a typical member of $\mathcal G(n,c/n)$ is $\mathcal
T((1-\varepsilon)n,\Delta)$-universal. The constant $c$ in this result was further improved in \cite{BCPS}.
Later on, Balogh, Csaba and
Samotij showed in \cite{BCS} that $\mathcal G(n,c/n)$ is w.h.p.\ (with high
probability) $\mathcal T((1-\varepsilon)n,\Delta)$-universal
even if an adversary is allowed to delete at most (roughly) half of
the edges touching any vertex. Note that universality for spanning trees can not be true for $p=c/n$, as at such a low density the random graph is w.h.p.\ disconnected.
As it turns out, results for spanning subgraphs are much harder to obtain.
In the case of the family of \emph{spanning trees} $\mathcal T(n,\Delta)$,
the best bound known for $\gnp$ to be $\mathcal
T(n,\Delta)$-universal is $p=\omega(\Delta n^{-1/3}\log^2n)$, due to
Johannsen, Krivelevich and Samotij \cite{JKS}.
The following immediate corollary of Theorem~\ref{thm:girthuniversality} improves this bound to $p=\omega(\Delta^{12} n^{-1/2}\log^3 n)$.

\begin{corollary}\label{thm:treesuniversality}
Let $n$ be a positive integer, and let $\Delta = \Delta(n) > 1$ be an integer. Then for $p= \omega(\Delta^{12} n^{-1/2}\log^3 n)$, a graph $G \sim \gnp$ is w.h.p.\ $\mathcal T(n,\Delta)$-universal.
\end{corollary}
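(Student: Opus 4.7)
The plan is to deduce Corollary~\ref{thm:treesuniversality} as an immediate consequence of Theorem~\ref{thm:girthuniversality} by checking that every forest on $n$ vertices with maximum degree at most $\Delta$ belongs to $\mathcal H(n,\Delta,2,7)$; applying the theorem with $d = 2$ then produces exactly the claimed bound $p = \omega(\Delta^{12} n^{-1/2} \log^3 n)$.

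Concretely, I would verify two things about an arbitrary $H \in \mathcal T(n,\Delta)$. First, since $H$ is a forest it is acyclic, so its girth is $\infty \geq 7$, taking care of the girth condition. Second, every subgraph $H' \subseteq H$ is itself a forest and therefore satisfies $|E(H')| \leq |V(H')| - 1$, which gives
\[
\frac{2|E(H')|}{|V(H')|} \;\leq\; \frac{2(|V(H')| - 1)}{|V(H')|} \;<\; 2.
\]
Taking the maximum over all subgraphs yields $d(H) < 2$, so the maximum density condition of $\mathcal H(n,\Delta,2,7)$ is met as well.

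These two observations together give the inclusion $\mathcal T(n,\Delta) \subseteq \mathcal H(n,\Delta,2,7)$, and substituting $d = 2$ into Theorem~\ref{thm:girthuniversality} then asserts that for $p = \omega(\Delta^{12} n^{-1/2} \log^3 n)$ a graph $G \sim \gnp$ is w.h.p.\ $\mathcal H(n,\Delta,2,7)$-universal, in particular $\mathcal T(n,\Delta)$-universal. There is no genuine obstacle in this derivation: all the real work has been packaged into Theorem~\ref{thm:girthuniversality}, and the only content of the corollary is the (essentially trivial) observation that forests sit comfortably inside $\mathcal H(n,\Delta,2,7)$ because their density is strictly less than $2$ and they contain no cycles at all.
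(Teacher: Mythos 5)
Your proof is correct and coincides with the paper's intended derivation: the paper simply labels this an immediate corollary of Theorem~\ref{thm:girthuniversality}, and the two observations you spell out (that every forest has girth $\infty \geq 7$ and maximum density strictly below $2$, hence lies in $\mathcal H(n,\Delta,2,7)$) are exactly what justifies plugging in $d=2$. Nothing is missing.
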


The proofs of Theorems~\ref{main} and ~\ref{thm:girthuniversality} use simple
partitioning lemmas for graphs and an
embedding technique based on matchings, developed by Alon and
F\"uredi in \cite{AF} and by Ruci\'nski in \cite{R}. Using similar
technique, we also managed to obtain a general embedding result in a model of random graphs where each edge is being colored uniformly at random in one color from a given set of colors. This leads us to the second part of our paper.

Let $G\sim \gnp$ and assume that each edge of $G$ is
colored uniformly at random with one of the colors
from the set $[c]:=\{1,\ldots,c\}$. This model is referred to as
$\gnpk$. For a given graph $H$ we say that a typical member of
$G\sim \gnpk$ contains a \emph{rainbow copy} of $H$, if $G$ contains
as a subgraph a copy of $H$ with all the edges colored in distinct
colors. In \cite{FL}, Frieze and Loh showed that for $p\geq
(1+\varepsilon)\log n/n$ and $c=n+o(n)$, a typical member of
$\gnpk$ contains a rainbow Hamilton cycle. Note that their result is
asymptotically optimal in both $p$ and the number of colors $c$. In
the following theorem we provide bounds on the edge probability $p$ (do not believed to be optimal), for which given any graph $H$ on $n$ vertices with $\Delta(H)=O(1)$, one can find a rainbow copy of $H$ in a typical member of $\gnpk$, provided $c=(1+o(1))|E(H)|$ ($c$ is asymptotically optimal).

\begin{theorem}\label{rainbow}
Let $\alpha>0$, let $\Delta$ and $d$ be integers, let $n$ be a sufficiently large integer and let $H\in\mathcal
H(n,\Delta,d)$. Then $G\sim \gnpk$ w.h.p.\
contains a rainbow copy of $H$, provided that $p \geq
n^{-1/d} \log^{5/d} n $ and $c = (1 + \alpha) |E(H)|$.
\end{theorem}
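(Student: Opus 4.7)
The plan is to follow the matching-based embedding paradigm developed for Theorems~\ref{main} and~\ref{thm:girthuniversality} while threading through it a color-allocation scheme that guarantees the rainbow property. By Vizing's theorem, first write $E(H)=M_1\cup\cdots\cup M_s$ as a union of $s\le\Delta+1$ matchings; partition the palette as $[c]=C_1\cup\cdots\cup C_s$ with $|C_i|=(1+\alpha)|M_i|$; and let $G_i$ denote the subgraph of $G$ consisting of edges whose color lies in $C_i$. Then $G_i\sim\mathcal G(n,p_i)$ with $p_i=p|M_i|/|E(H)|=\Theta(p/\Delta)=\Theta(p)$, so each $p_i$ still exceeds the threshold $n^{-1/d}\log^{5/d}n$ up to a constant factor. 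Although the different $G_i$'s are not mutually independent, they are jointly distributed as an $s$-coloring of \gnp\ in which each present edge is assigned to class $i$ with probability $|C_i|/c$ independently, which is enough for the subsequent embedding argument.

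The second step is to construct an injection $\phi\colon V(H)\to V(G)$ with $\phi(e)\in G_i$ for every $e\in M_i$. Following the Alon--F\"uredi/Ruci\'nski scheme, partition $V(H)$ into a constant-size ``core'' $V_0$ together with pieces $V_1,\dots,V_k$ ordered so that each vertex of $V_j$ has only a bounded number of neighbors in $V_0\cup\cdots\cup V_{j-1}$; the bounded-density hypothesis $d(H)\le d$ makes this possible. Embed $V_0$ first by a direct counting argument, then embed each $V_j$ via Hall's theorem in an auxiliary bipartite graph whose edges encode that a host vertex $u\in V(G)$ has the required backward neighbors in the appropriate $G_i$'s. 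Standard concentration for neighborhoods in $\mathcal G(n,p_i)$ at the assumed edge-probability yields Hall's condition for each such subproblem. To upgrade the embedding to a rainbow one, fold a color-usage constraint into the same matching argument: when extending $\phi$ to an edge $e\in M_i$, restrict the candidate host edges to those whose color in $C_i$ has not yet been used by previously embedded edges of $M_i$; since $|C_i|=(1+\alpha)|M_i|$, at every stage at least an $\alpha/(1+\alpha)$ fraction of the colors in $C_i$ remain available, so the conditional edge density drops only by a constant factor and the Hall-type verification continues to go through.

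The main obstacle I expect is verifying that Hall's condition survives the color restriction \emph{throughout} the sequential embedding. The set of ``used'' colors at any step is a complicated function of earlier random choices, so one needs a uniform concentration argument: for each host vertex, the number of its neighbors in each $G_i$ whose color is still available must be close to its expectation, for all vertices simultaneously. The partitioning lemma must therefore be tuned so that each piece $V_j$ consumes only $o(|C_i|)$ colors per vertex per class, which is where the logarithmic factor $\log^{5/d}n$ in the threshold is spent.
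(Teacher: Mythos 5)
Your approach is genuinely different from the paper's and has a real gap at exactly the place you flag.

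The paper does \emph{not} decompose $H$ via Vizing or partition the palette into classes $C_i$. Instead it peels off a single set $W$ of $\Theta(n/\log^2 n)$ vertices that are $2$-independent and all have degree at most the \emph{average} degree $\bar d$, puts the rest of $H$ in a $d$-degenerate order $w_1,\dots,w_t$, and embeds in two phases: Phase~I embeds $w_1,\dots,w_t$ one at a time, and Phase~II embeds $W$ via a perfect matching in a random $k$-out bipartite graph (Lemma~\ref{lem:matching}). The crucial technical device, which your sketch omits, is \emph{lazy exposure}: at each step the algorithm reveals a fresh, previously unexposed batch of potential edges together with their colors, keeps track for each vertex $v$ of an unexposed reservoir $U_v$, and only then applies Chernoff. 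Because each step's success probability is computed over edges and colors that were never seen before, no conditioning on the embedding history is needed; the entire argument is a sequence of independent coin tosses followed by one union bound over the $n$ embedding steps.

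Your plan, in contrast, pre-exposes $G$ and all colors, and then wants Hall's condition in the auxiliary bipartite graphs restricted to the currently available colors. You correctly identify this as the main obstacle, but the fix you gesture at (``a uniform concentration argument for all vertices simultaneously'' plus tuning the piece sizes) does not close it. The set of used colors after stage $j$ is a function of the random $G$ and the random coloring through the earlier embedding choices; a union bound over the possible used-color sets $S\subseteq C_i$ would need to beat roughly $2^{|M_i|}=2^{\Theta(n/\Delta)}$ events, while Chernoff at edge density $p\approx n^{-1/d}$ only yields failure probabilities of the form $e^{-\Theta(\mathrm{polylog}\,n)}$, since the expected auxiliary degree $\Theta(n p^{d})$ is merely polylogarithmic. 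So the union bound fails by an enormous margin and the Hall verification does not go through as stated. Relatedly, your final piece is an arbitrary part of the Alon--F\"uredi partition, whose vertices may have degree $\Delta$, so the tight perfect-matching stage could need $\Theta(n)$ colors; the paper avoids this by deliberately choosing the last set $W$ to consist of low-degree vertices, so that by \eqref{eq:few_final_edges} only $O(\alpha|E(H)|/\log^2 n)$ colors are consumed in Phase~II, leaving the palette comfortably large. A smaller quibble: the core $W_0=N(W_t)$ in this scheme has size $\Theta(n)$, not constant, and $p_i=\Theta(p/\Delta)$ is a constant factor \emph{below} the stated threshold, so you are implicitly spending logarithmic slack; fine, but worth saying explicitly.

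To rescue your Vizing-based route you would need to import the paper's on-the-fly exposure into every stage (reveal host edges and their colors only when first inspected, and maintain per-vertex reservoirs), at which point the color-class decomposition no longer buys you anything the paper's simpler two-phase argument doesn't already give you.
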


We remark that all of our proofs might be easily improved in terms of $\log n$ and $\Delta$ factors. Since we believe that our bounds are far from being optimal, we did no effort in optimizing those factors.

\textbf{Notation.} Our graph-theoretic notation is standard and
follows that of \cite{West}. For a graph $G$, let $V=V(G)$ and
$E=E(G)$ denote its sets of vertices and edges, respectively. For
subsets $U,W \subseteq V$, and for a vertex $v \in V$, we denote by
$E_G(U)$ all the edges of $G$ with both endpoints in $U$, by
$E_G(U,W)$ all the edges of $G$ with one endpoint in $U$ and one
endpoint in $W$ and by $E_G(v,U)$ all the edges with one endpoint
being $v$ and one endpoint in $U$. We write $N_G(v)$ for the
neighborhood of $v$ in $G$ and $\deg_G(v)$ for its degree. Moreover, we write $N_G(U)$ for the neighborhood of a set $U\subseteq V$.
For any positive integer $k$ and every vertex $V$ we denote the following set as \emph{$k$-neighborhood} of $v$:
$$\left\{v\in V \mid \text{the distance between $u$ and $v$ is at most $k$} \right\} $$
We say that a set $S\subseteq V$ is \emph{$k$-independent} if and only if (in $G$) the distance between any two vertices of $S$ is at least $k+1$.

Given a graph $G$ and a positive constant $d>0$ we denote by
$D_{d}(G)$ the set of all vertices of $G$ with degree exactly $d$,
by $D_{\leq d}(G)$ the set of all vertices of degree at most $d$ and
in a similar way we define $D_{<d}(G)$, $D_{>d}(G)$ and $D_{\geq
d}(G)$. When it is clear to which graph $G$ we refer, we just denote
it by $D_d$, $D_{\leq d}$ etc.

Given two graphs $H$ and $G$, a bijection $f$ from $V(H)$ to $V(G)$
is called an \emph{embedding} of $H$ to $G$ if it maps each edge of
$H$ to an edge of $G$. In case that one assigns colors to the edges
of $G$, an embedding $f$ of $H$ to $G$ is called a \emph{rainbow
embedding} if in addition it maps the edges of $H$ into edges with
distinct colors in $G$.

Throughout the paper, wherever we use $\log n$ we refer to the
natural logarithm.

\section{Preliminaries}

\subsection{Probabilistic Tools}

We will need to employ bounds on large deviations of random
variables. We will mostly use the following well-known bound on the
lower and the upper tails of the binomial distribution due to
Chernoff (see \cite{JLR}).

\begin{lemma}\label{Che}
If $X \sim \emph{\text{Bin}}(n,p)$, then
\begin{itemize}
\item  $\Pr\left(X<(1-a)np\right)<e^{-a^2np/2}$ for every $a>0$;
\item $\Pr\left(X>(1+a)np\right)<e^{-a^2np/3}$ for every $0<a<3/2.$
\end{itemize}
\end{lemma}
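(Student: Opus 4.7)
The plan is to apply the standard Chernoff--Cram\'er exponential moment method, since $X$ is a sum of independent Bernoulli random variables $X_1,\ldots,X_n$ with parameter $p$, and $\mathbb{E}[X] = np$.

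For the upper tail, I fix $t > 0$ and apply Markov's inequality to the monotone transformation $e^{tx}$, giving
$$\Pr\left(X > (1+a)np\right) \leq \frac{\mathbb{E}[e^{tX}]}{e^{t(1+a)np}}.$$
By independence, $\mathbb{E}[e^{tX}] = (1 + p(e^t - 1))^n \leq \exp(np(e^t - 1))$ using $1 + x \leq e^x$. Choosing $t = \log(1+a)$ to minimize the resulting bound yields
$$\Pr\left(X > (1+a)np\right) \leq \exp\bigl(-np \cdot \varphi(a)\bigr),\qquad \varphi(a) := (1+a)\log(1+a) - a.$$
To reach the stated form, it then suffices to verify the elementary inequality $\varphi(a) \geq a^2/3$ on the interval $0 < a < 3/2$; this is a one-variable calculus check (e.g., compare Taylor expansions of $\varphi$ and $a^2/3$, or show that the derivative of $\varphi(a) - a^2/3$ is nonnegative on the interval).

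For the lower tail, I run the symmetric argument with $t < 0$: Markov applied to $e^{tX}$ and the inequality $\mathbb{E}[e^{tX}] \leq \exp(np(e^t - 1))$ give, after optimizing at $t = \log(1-a)$ (valid for $0 < a < 1$; the case $a \geq 1$ is trivial since then $(1-a)np \leq 0$),
$$\Pr\left(X < (1-a)np\right) \leq \exp\bigl(-np \cdot \psi(a)\bigr),\qquad \psi(a) := (1-a)\log(1-a) + a.$$
The remaining step is the calculus estimate $\psi(a) \geq a^2/2$ for all $a > 0$, which again reduces to checking a single-variable inequality by differentiation.

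The main obstacle, such as it is, lies in these two deterministic inequalities $\varphi(a) \geq a^2/3$ and $\psi(a) \geq a^2/2$ rather than in any probabilistic content. The asymmetry between the two tails (and in particular the restriction $a < 3/2$ in the upper-tail statement) is forced precisely by the fact that $\varphi(a) \geq a^2/3$ fails for sufficiently large $a$, whereas $\psi$ grows fast enough near $a = 1$ to accommodate the stronger constant $1/2$ uniformly in $a > 0$. Since this is the standard Chernoff estimate, I would simply refer to \cite{JLR} for the full computation rather than reproducing it.
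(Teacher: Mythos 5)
Your proposal is correct: the exponential moment (Chernoff--Cram\'er) argument with the optimizations $t=\log(1+a)$ and $t=\log(1-a)$, reduced to the elementary inequalities $(1+a)\log(1+a)-a\geq a^2/3$ on $0<a<3/2$ and $(1-a)\log(1-a)+a\geq a^2/2$ on $0<a<1$ (with $a\geq 1$ trivial), is exactly the standard proof that the paper itself defers to by citing \cite{JLR} without giving any argument. No discrepancy with the paper's treatment, so nothing further is needed.
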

The proof of the following slightly more general bounds follows directly from the Chernoff bound and is left as an exercise for the reader (see for example Problem~1.7 in \cite{dubhashi2009concentration}).

\begin{lemma}
\label{thm:chernoff}
Let $p, q\in [0,1]$ and let $X_1, \dots, X_n\in \{0,1\}$ be $n$ indicator variables and $X:=\sum_{i=1}^nX_i$. If for each $1\le i \le n$
$$\mathbb{E}[X_i|X_1, \dots, X_{i-1}] \geq p \quad \text{ and } \quad \mathbb{E}[X_i|X_1, \dots, X_{i-1}] \le q, $$
then it holds for every $0<\alpha<1$ that
$$\Pr[X\geq (1+\alpha)nq]\le e^{-\alpha^2nq/3} \quad \text{ and } \quad \Pr[X\le (1-\alpha)np]\le e^{-\alpha^2np/2}.$$
\end{lemma}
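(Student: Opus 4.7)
The plan is to reduce both tail bounds to the ordinary Chernoff bound (Lemma~\ref{Che}) by a standard stochastic-domination coupling.

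First I would enlarge the probability space to carry $n$ i.i.d.\ Uniform$[0,1]$ variables $U_1,\ldots,U_n$, independent of everything else. Since each $X_i$ is $\{0,1\}$-valued, its conditional law given $X_1,\ldots,X_{i-1}$ is Bernoulli with parameter $p_i := \mathbb{E}[X_i \mid X_1,\ldots,X_{i-1}]$, so I can realize $(X_1,\ldots,X_n)$ sequentially on the enlarged space by setting $X_i := \mathbf{1}[U_i \le p_i]$, where $p_i$ is the appropriate (random) function of $X_1,\ldots,X_{i-1}$. By construction this has the original joint distribution.

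For the upper tail, define $Y_i := \mathbf{1}[U_i \le q]$. Then $Y_1,\ldots,Y_n$ are i.i.d.\ Bernoulli$(q)$, hence $Y := \sum_i Y_i \sim \text{Bin}(n,q)$, and the hypothesis $p_i \le q$ gives $X_i \le Y_i$ pointwise. Therefore $X \le Y$ and Lemma~\ref{Che} applied to $Y$ yields $\Pr[X \ge (1+\alpha)nq] \le \Pr[Y \ge (1+\alpha)nq] \le e^{-\alpha^2 nq/3}$. The lower tail is symmetric: set $Z_i := \mathbf{1}[U_i \le p]$, which are i.i.d.\ Bernoulli$(p)$; the hypothesis $p_i \ge p$ gives $X_i \ge Z_i$ pointwise, hence $X \ge Z := \sum_i Z_i \sim \text{Bin}(n,p)$, and the lower-tail half of Lemma~\ref{Che} finishes.

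There is no real conceptual obstacle here; the one point that needs a line of justification is that the enlarged-space construction really produces the given joint distribution of $(X_1,\ldots,X_n)$, which is immediate because $\{0,1\}$-valued random variables are determined by their conditional means. (Alternatively, one can bypass the coupling entirely and argue via the moment generating function: iterating $\mathbb{E}[e^{\lambda X_i}\mid X_1,\ldots,X_{i-1}] = 1 + p_i(e^\lambda - 1) \le e^{q(e^\lambda-1)}$ gives $\mathbb{E}[e^{\lambda X}] \le e^{nq(e^\lambda-1)}$, after which Markov's inequality and the usual optimization in $\lambda$ reproduce the standard Chernoff exponent; the lower tail is analogous.) This is presumably why the authors declare the lemma ``left as an exercise.''
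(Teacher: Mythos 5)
The paper gives no proof of this lemma: it explicitly declares the bound an exercise, with a pointer to Problem~1.7 of Dubhashi--Panconesi, so there is nothing to compare against line-by-line. Your proposal is correct, and in fact you supply two valid routes: the coupling argument (realize $X_i := \mathbf{1}[U_i \le p_i]$ with $U_i$ i.i.d.\ Uniform$[0,1]$, dominate above by i.i.d.\ Bernoulli$(q)$'s and below by i.i.d.\ Bernoulli$(p)$'s, then invoke the binomial Chernoff bound of Lemma~\ref{Che}) and the MGF argument (iterate $\mathbb{E}[e^{\lambda X_i}\mid X_1,\dots,X_{i-1}] = 1+p_i(e^{\lambda}-1) \le e^{q(e^{\lambda}-1)}$ for $\lambda>0$, and similarly with $p$ for $\lambda<0$, then optimize). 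The MGF route is what the cited textbook exercise has in mind, since it reproduces the Chernoff exponent directly rather than reducing to an i.i.d.\ instance; the coupling route is arguably more elementary and makes the deduction ``follows directly from the Chernoff bound'' literal. The only thing worth a passing remark is that the coupled sequence you construct is a new process equal in distribution to $(X_1,\dots,X_n)$, not the original process itself, which is harmless because only the law of $X$ enters the tail estimate; you flag exactly this point, so the argument is complete.
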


\subsection{Graph-Theoretic Facts}
\label{sec:uni_graph}
In this section we mention a few facts about graphs which are used
extensively throughout the paper.

The first two lemmas consider the existence of $k$-independent sets in a graph.
\begin{lemma}\label{IndependentSetInGraphWithBoundedDegree}
Let $G$ be a graph on $n$ vertices with maximum degree $\Delta\geq
2$ and let $S\subseteq V(G)$ be such that the maximum degree of all
vertices in $S$ is at most $d$ (where $d\geq 1$). Then, $S$ contains
a set $U\subseteq S$ of size at least
$\frac{|S|}{d\Delta^{k}}$ which is $k$-independent in $G$.
\end{lemma}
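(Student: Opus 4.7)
The plan is to construct $U$ by a standard greedy procedure on $S$ and then bound how many vertices of $S$ get killed in each step. Initialize $U=\emptyset$ and $T=S$. While $T$ is nonempty, pick any $v\in T$, add it to $U$, and delete from $T$ every vertex of $G$ whose distance from $v$ (in $G$) is at most $k$, i.e.\ the closed $k$-neighborhood $B_k(v):=\{u\in V(G): \operatorname{dist}_G(u,v)\le k\}$. By construction the set $U$ produced at the end is $k$-independent in $G$: any two distinct $v,v'\in U$ were chosen in different iterations, and when $v$ was picked, $v'$ was still in $T$, so $v'\notin B_k(v)$.

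Next I would bound $|B_k(v)|$ for $v\in S$. Because $\deg_G(v)\le d$, the first sphere satisfies $|\{u:\operatorname{dist}_G(u,v)=1\}|\le d$; and since every other vertex of $G$ has at most $\Delta$ neighbours, a straightforward induction on $i\ge 1$ gives $|\{u:\operatorname{dist}_G(u,v)=i\}|\le d\Delta^{i-1}$. Summing the spheres, together with the contribution $1$ for $v$ itself,
\[
|B_k(v)|\;\le\;1+\sum_{i=1}^{k}d\Delta^{i-1}\;=\;1+d\cdot\frac{\Delta^k-1}{\Delta-1}\;\le\;d\Delta^k,
\]
where the last inequality uses $d\ge 1$ and $\Delta\ge 2$ (a quick case check shows it holds with room, with equality attained only at $\Delta=2$, $d=1$, small $k$).

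Finally, in each iteration of the procedure at least one but at most $|B_k(v)\cap T|\le|B_k(v)|\le d\Delta^k$ elements are removed from $T$. Hence the procedure performs at least $|S|/(d\Delta^k)$ iterations before $T$ becomes empty, and each iteration produces one new element of $U$. Therefore $|U|\ge |S|/(d\Delta^k)$, as required.

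The only real delicacy is the sphere-size bound: one needs to exploit the fact that the \emph{root} $v$ lies in $S$ (giving the first factor $d$ instead of $\Delta$) rather than only using the global maximum degree, since otherwise one would obtain the weaker $\Delta^{k+1}$ rather than $d\Delta^k$. Everything else is a routine greedy count.
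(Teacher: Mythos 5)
Your proof is correct and follows essentially the same greedy argument as the paper: pick a vertex of $S$, delete its closed $k$-ball, repeat, and bound the ball size by $d\Delta^k$. The only cosmetic difference is that the paper bounds the $i$-th sphere by $d(\Delta-1)^{i-1}$ (each non-root vertex spends one edge going back towards $v$), giving $1+d+d(\Delta-1)+\cdots+d(\Delta-1)^{k-1}\le d\Delta^k$, whereas you use the slightly looser $d\Delta^{i-1}$; both estimates suffice for the stated bound, and you correctly identify the key point that the root being in $S$ contributes the initial factor $d$ rather than $\Delta$.
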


\begin{proof}  Build $U$ greedily as follows: start
with $L:=S$ and $U:=\emptyset$. In each step add an arbitrary vertex $v\in L$ to $U$
and delete the $k$-neighborhood of $v$ (including $v$ itself) from
$L$. Since after each addition of a vertex to $U$ we delete at most

$$1+d+d(\Delta-1)+\ldots+d(\Delta-1)^{k-1}\leq d\Delta^k$$
vertices from $L$, we obtain the required.
\end{proof}

%
%

%

\begin{lemma}\label{IndependentSetMadeOfSmallVertices}
Let $G$ be a graph on $n$ vertices with maximum degree $\Delta \geq 2$ and let $d$ be an integer such that $dn \geq 2|E(G)|$. Then, for any integer $k$, $G$ contains a $k$-independent set $U \subseteq D_{\le d}(G)$ of size $|U|\geq \frac{n}{( d+1)d \Delta^{k}}$.
\end{lemma}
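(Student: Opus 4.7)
The plan is to reduce this to Lemma~\ref{IndependentSetInGraphWithBoundedDegree} applied to the set $S := D_{\le d}(G)$. Since every vertex in $S$ has degree at most $d$, that lemma immediately produces a $k$-independent subset of $S$ of size at least $|S|/(d\Delta^k)$. So the only thing I actually need to prove is that $|D_{\le d}(G)| \ge n/(d+1)$, which is a routine averaging argument from the hypothesis $dn \ge 2|E(G)|$.

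For the averaging step I would argue by contradiction: suppose $|D_{\le d}(G)| < n/(d+1)$. Then $|D_{>d}(G)| > n - n/(d+1) = dn/(d+1)$, and each such vertex contributes at least $d+1$ to the degree sum, giving
\[
2|E(G)| \;=\; \sum_{v \in V(G)} \deg_G(v) \;\ge\; (d+1) \cdot |D_{>d}(G)| \;>\; (d+1) \cdot \frac{dn}{d+1} \;=\; dn,
\]
contradicting $dn \ge 2|E(G)|$. Hence $|D_{\le d}(G)| \ge n/(d+1)$.

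Combining the two ingredients, an application of Lemma~\ref{IndependentSetInGraphWithBoundedDegree} to $S = D_{\le d}(G)$ (with the same $\Delta$ and the same $d$, which is a valid upper bound on the degree of every vertex of $S$ in $G$) produces a $k$-independent set $U \subseteq S \subseteq D_{\le d}(G)$ of size at least
\[
\frac{|S|}{d\Delta^k} \;\ge\; \frac{n}{(d+1)\,d\,\Delta^k},
\]
as required. There is no real obstacle here: the averaging bound is tight in the right way, and the previous lemma directly supplies the $k$-independence. If anything, the only small point to double-check is that $d \ge 1$ so that the hypothesis of Lemma~\ref{IndependentSetInGraphWithBoundedDegree} is satisfied; this is implicit because if $d = 0$ then $G$ is edgeless and the statement is trivial (every single vertex forms a $k$-independent set).
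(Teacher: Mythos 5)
Your proof is correct and follows essentially the same route as the paper: establish $|D_{\le d}(G)| \ge n/(d+1)$ by an averaging argument on the degree sum, then apply Lemma~\ref{IndependentSetInGraphWithBoundedDegree} to $S = D_{\le d}(G)$. The only cosmetic difference is that you phrase the averaging step as a contradiction while the paper bounds the degree sum directly; the substance is identical.
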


\begin{proof}
First, we claim that $|D_{\leq d}(G)|\geq \frac{n}{d+1}$. Indeed, let $G$ be a graph which satisfies the conditions of the lemma for some $\Delta$. Using the fact that $|D_{>d}|=n-|D_{\leq d}|$, we obtain that
$$dn \geq \sum_{v\in V(G)}\deg_G(v)\geq 0\cdot |D_{\leq d}|+(d+1) \cdot
(n-|D_{\leq d}|).$$
Therefore, we conclude that $|D_{\leq d}|\geq
\frac{n}{d+1}$.

Applying Lemma~\ref{IndependentSetInGraphWithBoundedDegree} we conclude
that there exists a $k$-independent set $U\subseteq D_{\le d}$ in $G$ of size at least
$$|U|\geq \frac{|D_{\leq  d}|}{ d \Delta^{k}}\geq
\frac{n}{( d+1)d \Delta^{k}},$$
as required.
\end{proof}

A graph $G$ is called \emph{$d$-degenerate} if every subgraph $G' \subseteq G$ contains a vertex of induced degree at most $d$. A moment's thought reveals that every graph $H \in \mathcal{H}(n, \Delta, d)$ is $d$-degenerate (but not vice versa). The following observation follows directly from the definition of $d$-degenerate graphs.

\begin{observation} \label{Degeneracy}
Let $n, \Delta$ and $d$ be positive integers and let $H$ be a
$d$-degenerate graph on $n$ vertices. Then there exists an ordering $(v_1, \ldots,
v_n)$ of the vertices of $H$ such that
$$|N(v_i) \cap \{v_1, \ldots,
v_{i-1}\}| \leq d$$
for every $2 \leq i \leq n$.
\end{observation}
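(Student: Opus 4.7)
The plan is to construct the ordering in reverse, peeling off one low-degree vertex at a time, exploiting the hereditary nature of $d$-degeneracy.

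First, I would note the key closure property: if $H$ is $d$-degenerate, then every induced subgraph $H'\subseteq H$ is also $d$-degenerate, since the defining property ``every subgraph contains a vertex of induced degree at most $d$'' is by definition inherited by subgraphs. This is the only fact about $d$-degeneracy that I will use.

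Next, I would apply $d$-degeneracy to $H$ itself to produce a vertex $v_n \in V(H)$ with $\deg_H(v_n) \leq d$, and then iterate: set $H_{n-1} := H - v_n$, use $d$-degeneracy on $H_{n-1}$ to pick $v_{n-1} \in V(H_{n-1})$ of induced degree at most $d$, and so on, defining $H_i := H_{i+1} - v_{i+1}$ and choosing $v_i$ as a vertex of degree at most $d$ in $H_i$. This produces an ordering $(v_1, \ldots, v_n)$ in which each $v_i$ satisfies $\deg_{H[\{v_1,\ldots,v_i\}]}(v_i) \leq d$.

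Finally, I would observe that $\deg_{H[\{v_1,\ldots,v_i\}]}(v_i) = |N_H(v_i) \cap \{v_1, \ldots, v_{i-1}\}|$ because $v_i$'s neighbors in the induced subgraph on $\{v_1,\ldots,v_i\}$ are precisely its $H$-neighbors among $\{v_1,\ldots,v_{i-1}\}$. This yields the desired bound for all $2 \leq i \leq n$. There is no real obstacle here; the only thing to be slightly careful about is making sure the induction is phrased so that the chosen vertex's index correctly tracks which ``back-neighbors'' remain in the prefix $\{v_1,\ldots,v_{i-1}\}$, but this is immediate from the reverse-order construction.
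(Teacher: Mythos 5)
Your proof is correct and is exactly the standard ``reverse peeling'' argument that the paper alludes to when it says the observation ``follows directly from the definition of $d$-degenerate graphs''; the paper gives no explicit proof, and yours fills in that routine step in the canonical way.
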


\section{Partitioning Lemmas}
\label{sec:uni_partitioning}
In this section we prove some lemmas about partitioning graphs from
$\mathcal H(n,\Delta,d)$ and $\mathcal{T}(n, \Delta)$. Before that,
we define a class of graphs which can be partitioned in a ``nice''
way, and then we show that $\mathcal H(n,\Delta,d)$ and
$\mathcal{T}(n, \Delta)$ belong to this class for suitably chosen
parameters.

\begin{definition}\label{def1}
Let $n, d$ and $t$ be positive integers and let $\varepsilon$ be a
positive number. The family of graphs $\mathcal{F}(n, t,
\varepsilon, d)$ consists of all graphs $H$ on $n$ vertices for
which the following holds. There exists a partition $V(H) = W_0 \cup
\ldots \cup W_t$ such that:
\begin{enumerate}[(i)]
\item $|W_t| = \lfloor \varepsilon n \rfloor$,
\item $W_0 = N(W_t)$,
\item $W_t$ is $3$-independent,
\item $W_i$ is $2$-independent for every $1\leq i\leq t-1$, and
\item for every $1\leq i\leq t$ and for every $w\in W_i$, $w$ has at
most $d$ neighbors in $W_0\cup\ldots \cup W_{i-1}$.
\end{enumerate}
\end{definition}

Now, we show that $\mathcal{H}(n, \Delta, d) \subseteq
\mathcal{F}(n, 4\Delta^6 \log n+1, \varepsilon, 2d)$.

\begin{lemma} \label{mainlemma}
Let $n$ be a positive integer, let $\Delta = \Delta(n)\geq2$ and $d=d(n)\geq2$
be integers and let $\varepsilon_0 = 1/(4\Delta^6)$. Then for every
$\varepsilon \leq \varepsilon_0$ we have
$$\mathcal{H}(n, \Delta, d) \subseteq \mathcal{F}(n, 4\Delta^6 \log n+1, \varepsilon,
2d).$$
\end{lemma}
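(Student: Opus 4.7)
The plan is to build the partition $V(H) = W_0 \cup W_1 \cup \ldots \cup W_t$ in three stages: first $W_t$, then $W_0$, then the intermediate $W_1, \ldots, W_{t-1}$ by iteratively peeling $2$-independent sets of low-degree vertices.

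\textbf{Stage 1: $W_t$ and $W_0$.} Since $H$ has maximum density at most $d$, we have $2|E(H)| \leq dn \leq (2d)n$, so Lemma~\ref{IndependentSetMadeOfSmallVertices} with $k=3$ and parameter $2d$ in place of $d$ yields a $3$-independent set $U \subseteq D_{\leq 2d}(H)$ of size at least $n/((2d+1)(2d)\Delta^3) \geq n/(4\Delta^6) \geq \lfloor\varepsilon n\rfloor$, using $d \leq \Delta$ and $\Delta \geq 2$. Take any $W_t \subseteq U$ of size $\lfloor\varepsilon n\rfloor$; this gives (i) and (iii), and property (v) for $i=t$ holds because each $w \in W_t \subseteq D_{\leq 2d}$ has all its (at most $2d$) neighbors outside $W_t$ by $2$-independence. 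Set $W_0 := N_H(W_t)$, which gives (ii); since $W_t \subseteq D_{\leq 2d}$, we have $|W_0| \leq 2d|W_t| \leq n/(2\Delta^5)$, much smaller than $n$.

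\textbf{Stage 2: iterative peeling for $W_1, \ldots, W_{t-1}$.} Let $V' := V(H) \setminus (W_0 \cup W_t)$ and $R_{t-1} := V'$. For $i = t-1, t-2, \ldots, 1$, the induced subgraph $H[W_0 \cup R_i]$ inherits maximum density $\leq d$ from $H$, so by an averaging argument the set $S_i := \{v \in R_i : \deg_{H[W_0\cup R_i]}(v) \leq 2d\}$ has size at least $(|R_i| - |W_0|)/2$. Applying Lemma~\ref{IndependentSetInGraphWithBoundedDegree} to $G = H$ with this $S_i$ and $k=2$ produces a subset $W_i \subseteq S_i$ of size at least $|S_i|/\Delta^3$ that is $2$-independent in $H$, establishing (iv). Since $W_i$ is $2$-independent, every $v \in W_i$ satisfies $|N_H(v) \cap (W_0 \cup R_i \setminus W_i)| = \deg_{H[W_0 \cup R_i]}(v) \leq 2d$, and because $W_0 \cup \bigcup_{j<i}W_j \subseteq W_0 \cup (R_i \setminus W_i)$, condition (v) holds for $W_i$. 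Set $R_{i-1} := R_i \setminus W_i$ and continue. The bound $|W_i| \geq (|R_i| - |W_0|)/(2\Delta^3)$ implies $|R_i| - |W_0| \leq n(1 - 1/(2\Delta^3))^{t-1-i}$, so after $t-1 = 4\Delta^6 \log n$ iterations $|R_0| - |W_0| \leq n \cdot n^{-2\Delta^3} \ll 1$.

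\textbf{Main obstacle.} The geometric decay leaves a residual set of size up to $|W_0|$ rather than exactly $0$, whereas a valid partition requires $R_0 = \emptyset$. Closing this gap is the main technical step I would need to handle; likely either by refining the choice of $W_t$ so that no $v \in V'$ has more than (say) $d$ neighbors in $W_0$ — feasible since $|E(V',W_0)| = O(n/\Delta^4)$ produces only $O(n/\Delta^5)$ ``bad'' vertices that can be excluded or relocated — or by reserving a small portion of the $4\Delta^6\log n$ iterations for a final $2$-independent partition of the tiny leftover via a $(\Delta^2+1)$-coloring of $H^2$, verifying (v) directly using the smallness of $W_0$. The generous $\Delta^6 \log n$ factor gives ample slack for such corrections.
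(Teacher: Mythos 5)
Your proposal correctly identifies the overall shape of the argument (peel $W_t$ from low-degree vertices, take $W_0 = N(W_t)$, then iteratively extract $2$-independent sets of low-degree vertices), but it contains a genuine gap that you have yourself flagged, and the fixes you sketch do not close it. The root cause is that you include $W_0$ in the peeling graph $H[W_0\cup R_i]$ so that the degree bound $\deg_{H[W_0\cup R_i]}(v)\le 2d$ directly delivers Property~(v); the price is that the low-degree set $S_i$ lives in $R_i$ while the degree count ranges over $W_0\cup R_i$, so your averaging only gives $|S_i|\ge(|R_i|-|W_0|)/2$, and the geometric decay stalls at a residual of size up to $|W_0|=\Theta(\varepsilon n)$. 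Neither of your suggested repairs works as stated: the ``exclude or relocate bad vertices'' idea has nowhere to relocate them to once $W_0=N(W_t)$ is fixed, and the ``$(\Delta^2+1)$-coloring of $H^2$'' idea fails Property~(v) outright, because a leftover vertex can have up to $\Delta\gg 2d$ neighbors inside $W_0$ -- the smallness of $W_0$ controls the number of bad vertices, not the degree of a single bad vertex into $W_0$.

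The single idea you are missing is the one that lets the paper peel on $H\setminus(W_0\cup W_t)$ (so the recursion genuinely shrinks to zero) and yet still control neighbors in $W_0$: take $W_t$ to be $4$-independent rather than merely $3$-independent, and take $W_t\subseteq D_{\le d}(H)$. Then $4$-independence forces every vertex $w\notin W_0\cup W_t$ to have $N_H(w)\cap W_0\subseteq N_H(x)$ for a single $x\in W_t$ (otherwise two vertices of $W_t$ would be joined by a path of length $4$), and since $x\in D_{\le d}$ this gives $|N_H(w)\cap W_0|\le d$ for free. Adding the $\le d$ neighbors in $\bigcup_{1\le j<i}W_j$ coming from the degree bound inside $H_i$ yields Property~(v) with parameter $2d$ without ever putting $W_0$ back into the peeling graph. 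Once you make this change (use $k=4$ and $D_{\le d}$ in Stage~1, then peel inside $H\setminus(W_0\cup W_t)$ exactly as you do but without $W_0$), your Stage~2 computation goes through and the residual vanishes; the remaining size bookkeeping ($n/((d+1)d\Delta^4)\ge\varepsilon_0 n$, and $t-1\ge\varepsilon_0^{-1}\log n$ iterations suffice) is routine.
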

\begin{proof}
Let $H\in \mathcal H(n,\Delta,d)$ and $t = 4\Delta^6 \log n+1$. We show that $H \in \mathcal{F}(n, t, \varepsilon, 2d)$, for every $\varepsilon \leq \varepsilon_0$.

Using
Lemma~\ref{IndependentSetMadeOfSmallVertices}, one can find a
$4$-independent set $U \subseteq D_{\leq d}(H)$ of size
$$|U| \geq \frac{n}{(d + 1)d \Delta^4 }  \geq \varepsilon_0 n.$$
Let $W_t \subseteq U$ be an arbitrary subset of
size $\lfloor \varepsilon n \rfloor$, and set $W_0=N_H(W_t)$ and
$H_{t-1}:=H\setminus (W_0\cup W_t)$. We further partition $H_{i}$,
for $i = t - 1, \ldots, 1$, as follows:
\begin{itemize}
\item If $V(H_i) = \emptyset$ then set $W_i := \emptyset$ and $V(H_{i-1}) :=
\emptyset$.
\item Otherwise, $H_i \in \mathcal{H}(|H_i|, \Delta, d)$ and thus by Lemma~\ref{IndependentSetMadeOfSmallVertices} there exists a $2$-independent set $U \subseteq D_{\leq d}(H_i)$ of size $|U| \geq \frac{|H_i|}{(d + 1)d \cdot \Delta^2} \geq \frac{|H_i|}{2\Delta^4}\geq \varepsilon_0 |H_i|$. Set $W_i := U$ and $H_{i-1} := H_i \setminus W_i$.
\end{itemize}
Using the fact that $\log(1 - x) \le -x$ for every $0 < x < 1$, we have that
$$t = 4\Delta^6 \log n +1= \log n / \varepsilon_0+1 \geq - \log n / \log (1 - \varepsilon_0) +1 = -\log_{1 - \varepsilon_0} n +1.$$
Since for each $i$ we have that $|V(H_i)|\leq
(1-\varepsilon_0)|V(H_{i+1})|$, and since $t\geq
-\log_{1-\varepsilon_0} n+1$, it follows that $|V(H_1)| \le 1$.

Now, let $V(H) = W_0 \cup \ldots \cup W_t$ be the obtained partition
and note that each vertex $w \in W_i$ has at most $d$ neighbors in
$W_1 \cup \ldots \cup W_{i-1}$ for $2 \leq i < t$ (it follows
immediately from the construction). Since all the properties
$(i)-(iv)$ of Definition \ref{def1} follow easily from the
construction, it thus remains to show that Property $(v)$ holds.
That is, we need to show that every vertex in $w \in W_1\cup\ldots
\cup W_{t-1}$ has at most $d$ neighbors in $W_0$, and then we
conclude that every vertex in $W_1\cup\ldots\cup W_t$ sends at most
$2d$ ``back-edges''. For this aim, note first that every vertex in
$W_t$ has at most $d$ neighbors in $W_0$, and that $W_0 = N_H(W_t)$.
Therefore, if there exists a vertex $w \in W_1\cup\ldots\cup
W_{t-1}$ with at least $d + 1$ neighbors in $W_0$, then there must
exist at least two vertices $x, y \in W_t$ such that $N_H(x) \cap
N_H(w) \neq \emptyset$ and $N_H(y) \cap N_H(w) \neq \emptyset$.
Therefore, one can find a path of length four between $x$ and $y$,
which clearly contradicts the assumption that $W_t$ is
$4$-independent. This completes the proof.
\end{proof}

Next, we show that $\mathcal{H}(n, \Delta, d, 7) \subseteq \mathcal{F}(n,
16 d^2 \Delta^2 \log n+1, \varepsilon, d)$.

\begin{lemma} \label{mainlemmaTrees}
Let $n$ be a positive integer, let $\Delta = \Delta(n)$ and $d=d(n)\geq 2$ be integers, and let $\varepsilon_0 = 1/(2d^2\Delta^6)$. Then for every $\varepsilon \leq \varepsilon_0$ we have
$$\mathcal{H}(n, \Delta, d, 7) \subseteq \mathcal{F}(n, 16 d^2 \Delta^2 \log n+1, \varepsilon, d).$$
\end{lemma}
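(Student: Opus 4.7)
My plan is to mimic the proof of Lemma~\ref{mainlemma}, using the girth-$\ge 7$ hypothesis to tighten the back-edge bound from $2d$ to $d$ by bounding the edges to $W_0$ and to the lower inner layers in a single step. First, I would apply Lemma~\ref{IndependentSetMadeOfSmallVertices} to $H$ with $k=4$ to find a $4$-independent set $U\subseteq D_{\le d}(H)$ of size at least $n/((d+1)d\Delta^4)\ge\varepsilon_0 n$; I then pick $W_t\subseteq U$ of size $\lfloor\varepsilon n\rfloor$ and set $W_0:=N_H(W_t)$ and $H_{t-1}:=H\setminus(W_0\cup W_t)$.

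The key consequence of girth $\ge 7$ is that every $v\in V(H)\setminus W_t$ has at most one neighbor in $W_0$: if $v$ had distinct neighbors $a,b\in W_0$ with unique $W_t$-neighbors $x,y$, then girth $\ge 7$ rules out the $4$-cycle $vaxb$ (so $x\ne y$), and $x$--$a$--$v$--$b$--$y$ would then be a path of length $4$ contradicting $4$-independence of $W_t$.

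For the inner layers I would work inside the enlarged subgraph $H_i^+:=H[V(H_i)\cup W_0]$ instead of inside $H_i$. Applying Lemma~\ref{IndependentSetMadeOfSmallVertices} to $H_i^+$ with $k=2$ produces a $2$-independent $U_i\subseteq D_{\le d}(H_i^+)$ of size at least $|V(H_i^+)|/((d+1)d\Delta^2)$; I then set $W_i:=U_i\setminus W_0$ and $H_{i-1}:=H_i\setminus W_i$. Since $V(H_i^+)=V(H)\setminus(W_t\cup W_{i+1}\cup\cdots\cup W_{t-1})$ and $W_i$ is $2$-independent in $H_i^+$, the back-edges of any $v\in W_i$ are precisely its $H$-neighbors inside $V(H_i^+)$, namely $|N_H(v)\cap(W_0\cup W_1\cup\cdots\cup W_{i-1})|=\deg_{H_i^+}(v)\le d$, delivering property~(v) with the required bound $d$ in one stroke.

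The hard part will be verifying that $|W_i|$ remains a sufficient fraction of $|V(H_i)|$ so that the iteration terminates within the budget $t-1=16d^2\Delta^2\log n$. Since $|W_0|\le d\varepsilon n\le n/(2d\Delta^6)$, the bound $|W_i|\ge|U_i|-|W_0|\ge|V(H_i)|/(4d^2\Delta^2)$ holds whenever $|V(H_i)|\gtrsim 2d^2\Delta^2|W_0|\lesssim n/\Delta^4$, giving reduction factor $1/(4d^2\Delta^2)$ per iteration and comfortably fitting the budget in this regime. Once $|V(H_i)|$ drops below $\sim n/\Delta^4$ the $-|W_0|$ term dominates; then I would process the remaining $O(n/\Delta^4)$ vertices separately, e.g.\ by restricting to $D_{\le d}(H)\cap V(H_i)$, whose vertices have true $H$-degree at most $d$ and therefore satisfy the back-edge bound unconditionally, and peeling this residual off within the leftover layer budget. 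Properties~(i)--(iv) are then immediate from the construction.
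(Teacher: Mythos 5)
Your observation that girth $\geq 7$ together with $4$-independence of $W_t$ forces every vertex outside $W_t$ to have at most one neighbor in $W_0$ is correct, and taking the $2$-independent set inside $H_i^+ := H[V(H_i)\cup W_0]$ is a clean way to fold the $W_0$-edge into the degree bound $d$. But the iteration stalls: you need $|W_i| = |U_i \setminus W_0| \geq \gamma |V(H_i)|$ at every step, and since $|U_i| \geq |V(H_i^+)|/((d+1)d\Delta^2)$ while $|W_0|$ can be as large as $d\varepsilon n \approx n/(2d\Delta^6)$, the lower bound $|U_i| - |W_0|$ becomes non-positive once $|V(H_i)| \lesssim dn/\Delta^4$. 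You flag this yourself, but the proposed fix does not close the gap: restricting to $D_{\le d}(H)\cap V(H_i)$ only helps if a decent fraction of $V(H_i)$ has $H$-degree $\leq d$, whereas $|D_{\le d}(H)|$ is only guaranteed to be $\geq n/(d+1)$, so by the time $|V(H_i)|$ has dropped below $n/(d+1)$ all of $D_{\le d}(H)$ may already have been peeled off, leaving $V(H_i)$ entirely inside $D_{\geq d+1}(H)$. Such a vertex can have $H_i$-degree up to $d$ and still one $W_0$-neighbor, giving $d+1$ back-edges --- too many.

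The paper closes exactly this gap with a different mechanism, and that mechanism is the real content of the lemma. It sets $X := N_H(W_0)$ and at each step picks $W_i$ either from $D_{\leq d-1}(H_i)$ (degree $\leq d-1$ in $H_i$ plus at most one $W_0$-edge gives $\leq d$), or from $D_{\leq d}(H_i)\setminus X$ (degree $\leq d$ in $H_i$ and, being outside $X$, no $W_0$-neighbor at all). A counting argument shows that one of these two options always yields a $2$-independent set of size $\geq\gamma|V(H_i)|$: if $D_{\leq d-1}(H_i)$ is too small to supply option (1), then almost all of $V(H_i)$ has $H_i$-degree exactly $d$; girth $\geq 7$ together with $6$-independence of $W_t$ (not just $4$-independence --- it must control $X=N_H(N_H(W_t))$, which lies two steps away from $W_t$) implies $X$ is $2$-independent in $H_i$, so $|N_{H_i}(X)| \geq d\,|X\cap D_d|$, which would exceed $|V(H_i)|$ if too many degree-$d$ vertices were in $X$, and option (2) then applies. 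This argument works uniformly in $i$ and is precisely the piece missing from your proposal.
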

\begin{proof}
Let $\gamma = \frac{1}{8 (d+1)(d-1) \Delta^2}\le \frac{1}{d^2}$ and observe that for $d\geq 2$
\begin{equation} \label{eq:gamma}
\frac{1 - (d+1)(d-1)\Delta^2\gamma}{(d+1)(d-1)\Delta^2} > \gamma \quad \text{and}
\quad \frac{d^2(1 - (d+1)(d-1)\Delta^2\gamma)}{d+1} > 1.
\end{equation}
Let $H\in \mathcal H(n,\Delta, d, 7)$ and $t = 16d^2\Delta^2 \log n+1$. We show that $H \in \mathcal{F}(n, t, \varepsilon, d)$, for every $\varepsilon \leq \varepsilon_0$.

Using Lemma~\ref{IndependentSetMadeOfSmallVertices}, we find a
$6$-independent set $U \subseteq D_{\leq d}(H)$ of size
$$|U| \geq \frac{n}{d(d+1)\Delta^6} \geq \varepsilon_0 n.$$
For a fixed $\varepsilon \leq \varepsilon_0$, let $W_t \subseteq U$
be an arbitrary subset of size $\lfloor \varepsilon n \rfloor$, and
set $W_0=N_H(W_t)$, $X=N_H(W_0)$, and $H_{t-1}:=H\setminus (W_0 \cup
W_t)$.


For $i=t-1,\dots, 1$, we iteratively find subsets of vertices $W_{i}
\subseteq V(H_i)$ (and set $H_{i-1}:=H_i \setminus W_{i}$), in such
a way that at the end of the process the obtained partition $V(H) =
W_0 \cup \ldots \cup W_t$ satisfies Properties $(i)-(v)$ of
Definition \ref{def1}.

If $V(H_i) = \emptyset$ then set $W_i := \emptyset$ and $V(H_{i-1}) :=
\emptyset$. Otherwise, construct $W_{i}$ as follows:
\begin{enumerate}[(1)]
\item If there exists a $2$-independent set $U\subseteq D_{\le d-1}(H_i)$ of size $U \geq \gamma |V(H_i)|$, then set $W_i:=U$.

\item Otherwise, pick a $2$-independent set $W_{i} \subseteq D_{\leq d}(H_i) \setminus X$ of size $|W_{i}| \geq \gamma |V(H_i)|$.
\end{enumerate}
Observe that a vertex can have at most one neighbor in $W_0$. Otherwise, we would either have that $W_t$ is not $6$-independent or that there exists a cycle of length $4$ in $H$, both yielding a contradiction. Therefore, by the definition of $W_i$, we ensure that Property $(v)$ of Definition \ref{def1} is satisfied.
We now claim that whenever $(1)$ fails, there exists a $2$-independent
set $U\subseteq D_{\leq d}(H_i)$ of size $|U| \geq \gamma n_i$
(where $n_i=|V(H_i)|$) such that $U\cap X=\emptyset$ as required in
$(2)$. We remark that we always consider the graph $H_i$ when we write $D_{d}$, $D_{\le d}$ or $D_{\geq d}$ in the following calculations.

To prove our claim, suppose that there is no
$2$-independent set $U\subseteq D_{\le d-1}(H_i)$ of size at least
$\gamma n_i$.
First, note that by
Lemma~\ref{IndependentSetInGraphWithBoundedDegree} we have $$|D_{\le
d-1}| \leq(d-1)\Delta^2\gamma n_i.$$ Second, since $H_i\in \mathcal
H(n_i,\Delta, d, 7)$, it follows that
$$d n_i\geq \sum_{v\in V(H_i)}\deg_{H_i}(v)\geq 0\cdot |D_{\le d-1}| +\left(|D_{\le d}|-|D_{\le d-1}|\right)\cdot d +\left(n_i-|D_{\le d}|\right)\cdot (d+1),$$
and therefore
$$
|D_{\le d}| \geq n_i - |D_{\le d-1}|\cdot d.
$$
Using the bound on $|D_{\le d-1}|$, we get that
\begin{equation} \label{eq:D_d_large}
|D_{d}| = |D_{\le d}| - |D_{\le d-1}| \ge n_i - (d+1)|D_{\le d - 1}| \ge n_i \cdot (1 -
(d+1)(d-1)\Delta^2\gamma).
\end{equation}
Next, note that if $|X \cap D_{d}| \le d |D_{d}| / (d+1)$, then by
Lemma~\ref{IndependentSetInGraphWithBoundedDegree} there exists a
$2$-independent set $W_i \subseteq D_{d} \setminus X$ of size at
least
$$ |W_i| \geq \frac{|D_{d} \setminus X|}{d\Delta^2} \stackrel{\eqref{eq:D_d_large}}{\ge}  \frac{1 - (d+1)(d-1)\Delta^2\gamma}{d(d+1)\Delta^2} n_i \stackrel{\eqref{eq:gamma}}{\ge} \gamma n_i, $$
as required. Therefore, assume that $|X \cap D_{d}| > d|D_{d}| / (d+1)$.
Observe that $X$ is a $2$-independent set in $H_i$, as every vertex
in $X$ is a neighbor of a vertex in $W_0=N_H(W_t)$, $W_t$ is
$6$-independent and there are no cycles of length at most 6 in $H$. It thus follows that
$N_{H_i}(X) \cap X = \emptyset$ and every vertex in $N_{H_i}(X)$ has
exactly one neighbor in $X$. Therefore,
$$ |N_{H_i}(X)| \geq d |X \cap D_{d}| > d^2 |D_{d}| / (d+1),$$
and it follows from \eqref{eq:D_d_large} that
$$ |N_{H_i}(X)| > \frac{d^2}{d+1} \cdot  (1 - (d+1)(d-1)\Delta^2\gamma)n_i \stackrel{\eqref{eq:gamma}}{>} n_i,$$
which is not possible. Hence, one can always find a $2$-independent
set $W_i \subseteq V(H_i)$ of size at least $\gamma n_i$ as
required.

Using the fact that $\log(1 - x) \le -x$ for every $0 < x < 1$, we have that
$$t+1 = 16d^2 \Delta^2 \log n+1 \geq \log n / \gamma +1\geq - \log n / \log (1 - \gamma)+1 = -\log_{1 - \gamma} n+1.$$
Since for each $i$ we have that $|V(H_i)|\leq
(1-\gamma)|V(H_{i+1})|$, and since $t\geq
-\log_{1-\gamma} n$+1, it follows that $|V(H_1)|\le 1$. Finally, let $V(H)=W_0 \cup \ldots
\cup W_t$ be the obtained partition. It follows immediately from the
construction that Properties $(i)-(v)$ of Definition \ref{def1}
hold. This completes the proof.
\end{proof}

\section{Proof of Theorem~\ref{main} and Theorem~\ref{thm:girthuniversality}}\label{sec:universality}
\label{sec:uni_proof1}
In this section we prove Theorem~\ref{main} and Theorem~\ref{thm:girthuniversality}. These theorems follow easily from the following theorem and Lemma~\ref{mainlemmaTrees} and~\ref{mainlemma}.

\begin{theorem}\label{universality_F}
Let $n$ and $t$ be positive integers, let $d=d(n)\geq2$ be an integer, and let $\varepsilon < \frac{1}{2d}$. Then, a
graph $G \sim \gnp$ is w.h.p.\ $\mathcal{F}(n, t, \varepsilon, d)$-universal, provided that $p = \omega\left( \varepsilon^{-1} t n^{-1/d} \log^2 n\right)$.
\end{theorem}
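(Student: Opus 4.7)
The strategy is to embed $H \in \mathcal{F}(n, t, \varepsilon, d)$ into $G \sim \gnp$ following the partition $V(H) = W_0 \cup W_1 \cup \ldots \cup W_t$, in four phases, maintaining a partial embedding $\phi$ and a set $U \subseteq V(G)$ of unused vertices.

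First, I would establish the pseudo-random properties of $G$. Using Lemma~\ref{Che} and a union bound over all sets $T \subseteq V(G)$ with $|T| \le d$ and all large $U \subseteq V(G)$, w.h.p.\ for every such $T,U$ with $|U|\ge n/(4t)$ one has $\big|\bigcap_{v\in T} N_G(v)\cap U\big| \ge \tfrac{1}{2}p^{|T|}|U|$. The hypothesis $p = \omega(\varepsilon^{-1}t\,n^{-1/d}\log^2 n)$ forces $p^d n \gg \varepsilon^{-d}t^{d}\log^{2d}n$, which provides the slack I will spend on the union bound and on the progressive shrinking of $U$.

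Second, I would embed $W_0$. Property (ii) together with property (v) gives $|W_0|\le d|W_t|\le d\varepsilon n < n/2$, so there is ample room. I would embed vertex-by-vertex along a degeneracy-style order of $H[W_0]$ (coming essentially from the partition structure itself, since any interior edges of $W_0$ link a $W_0$-vertex to an earlier $W_0$-vertex and each $w\in W_0$ has at most $\Delta$ neighbors total), mapping each new vertex into an unused vertex of the joint neighborhood of its already-embedded neighbors; the pseudo-randomness from the first step guarantees this is always possible.

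Third, for $i=1,\ldots,t-1$, I would embed $W_i$ via a perfect matching. Build the auxiliary bipartite graph $B_i$ with parts $W_i$ and the current $U$, placing $w\in W_i$ adjacent to $v\in U$ exactly when $v\in\bigcap_{u\in N_H^{-}(w)} N_G(\phi(u))$, where $N_H^{-}(w)$ denotes the neighbors of $w$ among $W_0\cup\ldots\cup W_{i-1}$; by property (v) we have $|N_H^{-}(w)|\le d$, and by the pseudo-randomness every $w$ has at least $\tfrac{1}{2}p^d|U|$ candidates in $B_i$. Using the $2$-independence of $W_i$ together with a matching lemma in the spirit of Alon--F\"uredi~\cite{AF} and Ruci\'nski~\cite{R}, I would verify Hall's condition on $B_i$ and extend $\phi$ along the resulting perfect matching.

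Fourth, I would embed $W_t$ analogously on a bipartite graph $B_t$, exploiting the stronger $3$-independence (property (iii)): the back-neighborhoods of distinct vertices of $W_t$ are not only distinct but their images under $\phi$ are essentially disjoint, decoupling the candidate sets so that Hall's condition is easy to verify.

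The main obstacle is the third step: making Hall's condition hold uniformly over the $t$ levels. Since $|U|$ shrinks as we peel off $W_1,\ldots,W_i$, I need that even at the final levels the candidate sets in $B_i$ are substantially larger than $|W_i|$; this is exactly why the assumed bound on $p$ carries the factor $\varepsilon^{-1}t$, which compensates both for the shrinkage of $U$ and for the union bound over $t$ levels, while the extra $\log^2 n$ factor absorbs the probabilistic error in the pseudo-randomness estimate.
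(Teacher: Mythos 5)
Your high-level structure (multi-phase embedding following the partition, with auxiliary bipartite graphs and Hall's condition) matches the paper's, but three of the key mechanisms are missing or broken, and they are exactly the places where the argument actually has to work.

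\textbf{The pseudorandomness claim in your first step is false.} You want: w.h.p.\ for every $T$ with $|T|\le d$ and every $U$ with $|U|\ge n/(4t)$, $\bigl|\bigcap_{v\in T}N_G(v)\cap U\bigr|\ge \tfrac12 p^{|T|}|U|$. For fixed $T,U$ the failure probability from Chernoff is $e^{-\Theta(p^d|U|)}$. At the threshold $p=\omega(\varepsilon^{-1}t\,n^{-1/d}\log^2n)$ we have $p^d|U|=\omega(\varepsilon^{-d}t^{d-1}\log^{2d}n)$, which for constant $\varepsilon$ and $t=\Theta(\log n)$ is only polylogarithmic in $n$, so the failure probability is $e^{-\mathrm{polylog}(n)}$. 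Meanwhile the number of sets $U$ of size $n/(4t)$ is $\binom{n}{n/(4t)}\ge e^{\Theta((n/t)\log t)}$, which is enormously larger. The union bound you propose cannot possibly close. The paper sidesteps this entirely by fixing, in advance, a partition $V(G)=V_0\cup V_1\cup\dots\cup V_t$ of the \emph{host} graph and phrasing the pseudorandom property (P2) only for the fixed sets $V_i$, with the level-$i$ matching carried out inside $V_i^*\supseteq V_i$. Moreover, (P2) is indexed by \emph{collections} $\mathcal L$ of disjoint $k$-subsets rather than by single sets $T$, and its failure probability decays like $n^{-3d|\mathcal L|}$, which is what makes the union bound over $\mathcal L$ go through. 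Even setting aside the union bound issue, a bound on $|\bigcap_{v\in T}N_G(v)\cap U|$ for a \emph{single} $T$ is not what Hall's condition for $B_i$ needs: you need a lower bound on $\bigl|\bigcup_{L\in\mathcal L}\{v: L\subseteq N_G(v)\}\bigr|$, and these two quantities are very different.

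\textbf{The greedy embedding of $W_0$ throws away the structure needed to finish.} The perfect matching in $B_t$ is the genuinely hard step: since $|\mathcal L_t|=|V_t^*|=|W_t|$, there is no slack, and Hall's condition must be verified for $U$ of size close to $|V_t^*|$. If $W_0$ is embedded greedily, the sets $L_t(w)$ for $w\in W_t$ are arbitrary pairwise-disjoint $\le d$-subsets of $V(G)$, and a direct union bound over the possible families $\mathcal L_t$ costs roughly $n^{dm}$ for collections of size $m$, which again cannot be beaten at this edge probability (try $m=1$). The paper's way out is to first find a family $\mathcal K$ of $\varepsilon n$ vertex-disjoint $d$-cliques inside $V_0$ (using one half of the two-round exposure of $G$) and to embed $W_0$ into $V(\mathcal K)$ so that each $L(w)$, $w\in W_t$, lands inside a \emph{single} clique of $\mathcal K$. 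The $3$-independence of $W_t$ is used precisely to make this possible: it guarantees the $L(w)$'s are pairwise disjoint and that there are no $H$-edges between distinct $L(w)$'s, so sending each $L(w)$ to a clique respects all edges; this is not the ``decoupling of candidate sets'' you describe. With this clique structure, $N_{B_t}(\cdot)$ becomes a count of cliques of $\mathcal K$ hit by a set of vertices, which is exactly what property (P1) controls, and (P1) is verified by a union bound over \emph{small} sets only. The Hall verification is then done by flipping to $Y:=V_t^*\setminus N_{B_t}(U)$ and showing $|N_{B_t}(Y)|$ must be large, which contradicts $|U|+|N_{B_t}(Y)|\le |V_t^*|$ when $|U|$ is near $|V_t^*|$.

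\textbf{The intermediate Hall verification is unargued.} For $1\le i\le t-1$ the paper does not just invoke ``a matching lemma in the spirit of Alon--F\"uredi''; it crucially uses the fixed small target sets $V_i$ (so that $|\mathcal L_i|=|W_i|\le|V_i^*|-\tfrac{\varepsilon n}{16}$, leaving enough slack that large $U$ never need to be handled), and then splits $U$ by the sizes of its sets, picks the dominant size class $U_k$, and treats three regimes of $|U_k|$ using both halves of (P2). Without the host-side partition and the (P2)-style statement, this case analysis has no pseudorandom input to run on. In short, your outline correctly identifies the flow of the proof, but the host partition, the clique family $\mathcal K$ with the $W_0\to V(\mathcal K)$ embedding, and the flip trick for large sets in $B_t$ are the actual load-bearing ideas, and all three are absent.
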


In order to prove Theorem~\ref{universality_F}, we use a similar embedding
algorithm as the one presented in \cite{KL} (and previously in \cite{DKRR}). Let $d$ be a positive integer and $\varepsilon$ be a positive constant.
Our goal is to show that, whenever a graph $G$ is ``good'' with
respect to some properties, then $G$ is $\mathcal{F}(n, t, \varepsilon, d)$-universal.

Before we state formally what a ``good'' graph is, we define the
following auxiliary bipartite graph. For a graph $G$, an integer
$k$, a subset $U\subseteq V(G)$ and a collection $\mathcal{L}$ of
pairwise disjoint $k$-subsets of $V(G) \setminus U$, define the
bipartite graph $\mathcal{B}(\mathcal{L}, U)$ as follows: the parts
are $\mathcal{L}$ and $U$, and two elements $L\in \mathcal{L}$ and
$u \in U$ are adjacent if and only if $L \subseteq N_G(u)$. Now we
can define the notion of an \emph{$(n, t, \varepsilon, d)$-good} graph $G$.

\begin{definition} \label{def:good_graph}
A graph $G$ on $n$ vertices is called \emph{$(n, t, \varepsilon, d)$-good} if
there exists a partition $V(G) = V_0 \cup V_1 \cup \dots \cup
V_t$ with $$|V_i|=\frac{\varepsilon n}{16t} \quad \text{ for } 1\le i\le t\quad \text{ and } \quad |V_0|=(1-\frac{\varepsilon}{16})n, $$
such that for $p \geq \varepsilon^{-1} t n^{-1/d} \log^2 n$ the following properties
hold.
\begin{itemize}
\item[(P1)] There exists a set $\mathcal{K}\subset V_0$ of $\varepsilon n$ vertex-disjoint $d$-cliques such that for all $U\subset V(G) \setminus V(\mathcal{K})$ with $|U|\le (p/2)^{-d} / 2$, we have
$$\left|\left\{K_d\in \mathcal{K} \mid V(K_d)\subset N_G(u) \text{ for some } u\in U \right\} \right|\geq \frac{1}{2^{d+2}}p^{d} |U| \varepsilon n.$$
\item[(P2)] Let $1\le k \le d$, and $\mathcal{L}$ be a collection of pairwise disjoint $k$-subsets of $V(G)$.

If $|\mathcal{L}|\le (p/2)^{-k} / 2$, then for each $i=1, \dots, t$
with $V_i\cap(\cup_{L\in\mathcal{L}}L)=\emptyset$, we have that
\begin{equation}
|N_{\mathcal{B}(\mathcal{L}, V_i)}(\mathcal L)|\geq
(p/2)^{k}|\mathcal{L}||V_i| / 2.
\end{equation}
If $|\mathcal{L}|\geq (p/2)^{-k} \log^{2(d-1)} n $, then for all $U$ with
$|U|\geq (p/2)^{-k} \log^{2(d-1)} n $ and $U\cap(\cup_{L\in
\mathcal{L}}L)=\emptyset$, the graph $\mathcal{B}(\mathcal{L}, U)$
has at least one edge.
\end{itemize}
\end{definition}

We first show that a random graph is typically good.
\begin{lemma}
\label{lem:random_graph_is_good} Let $\varepsilon <\frac{1}{2d}$ and let $n$ be a positive integer. Then, a graph $G \sim \gnp$ is w.h.p.\
$(n, t, \varepsilon, d)$-good, provided that $p=\omega\left( \varepsilon^{-1} t n^{-1/d} \log^2 n\right)$.
\end{lemma}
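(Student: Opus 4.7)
I would reveal $G$ in two independent rounds by writing $G = G_1 \cup G_2$ with $G_1, G_2 \sim \mathcal G(n, p')$ independent and $p'$ the solution of $(1-p')^2 = 1-p$, so $p/2 \le p' \le p$. Since $\gnp$ is vertex-transitive in law, I may fix any deterministic partition $V(G) = V_0 \cup V_1 \cup \cdots \cup V_t$ with the prescribed sizes and work with that. The strategy is to use $G_1$ solely to build the clique family $\mathcal K$ demanded by (P1), and $G_2$ as the fresh randomness driving all three expansion-type statements in (P1) and (P2) via a Chernoff bound plus a union bound.

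\textbf{Building $\mathcal K$ and verifying (P1).} Since $p' = \omega(n^{-1/d})$, w.h.p.\ $G_1[V_0]$ contains $\Omega(n^d (p')^{\binom{d}{2}})$ copies of $K_d$ while every vertex lies in only $O(n^{d-1}(p')^{\binom{d}{2}})$ of them; a greedy removal thus extracts $\varepsilon n$ vertex-disjoint copies of $K_d$, because each step consumes at most $\varepsilon d n < n/2$ vertices and hence destroys only an $O(\varepsilon d)$-fraction of the remaining $K_d$-copies. Call the resulting family $\mathcal K$ and condition on $G_1$ and $\mathcal K$. For any $U$ disjoint from $V(\mathcal K)$ with $|U| = m \leq (p/2)^{-d}/2$ and any $K \in \mathcal K$, the event $B_K := \{\exists\, u \in U : V(K) \subset N_{G_2}(u)\}$ satisfies
$$ \Pr[B_K] \;=\; 1 - (1-(p')^d)^m \;\geq\; \tfrac12\, m (p')^d \;\geq\; \frac{m p^d}{2^{d+1}}, $$
using $m(p')^d \leq 1$ and $1 - e^{-x} \geq x/2$ on $[0,1]$. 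The events $\{B_K\}_{K \in \mathcal K}$ involve pairwise disjoint edge-sets of $G_2$ (since $\mathcal K$ is vertex-disjoint), so they are mutually independent, and the count $Z_U$ in (P1) is a sum of $\varepsilon n$ independent Bernoullis with $\mathbb E[Z_U] \geq \varepsilon n m p^d / 2^{d+1}$. Chernoff gives $\Pr[Z_U < \varepsilon n m p^d / 2^{d+2}] \leq \exp(-c\,\varepsilon n m p^d)$; the hypothesis $p \geq \varepsilon^{-1} t n^{-1/d} \log^2 n$ implies $\varepsilon n p^d \geq \varepsilon^{1-d} t^d \log^{2d} n \gg \log n$, which absorbs the at-most-$n^{(p/2)^{-d}/2}$ union bound over all $U$.

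\textbf{Verifying (P2).} The small-$|\mathcal L|$ statement follows the same recipe with $v \in V_i$ in place of $K \in \mathcal K$: for each $v$ the indicator $Y_v := \mathbf 1[\exists L \in \mathcal L : L \subset N_{G_2}(v)]$ is a function of the edges joining $v$ to $\bigcup_{L} L$ in $G_2$, and $\mathbb E[Y_v] \geq (p/2)^k|\mathcal L|/2$ (split on whether $(p')^k|\mathcal L| \leq 1$, where $1-e^{-x} \geq x/2$ applies, or $(p')^k|\mathcal L| > 1$, where $\mathbb E[Y_v] \geq 1 - 1/e$ already beats the target, which is at most $1/4$). The $Y_v$ are independent over $v \in V_i$, Chernoff concentrates their sum, and a union bound over $\mathcal L$ (at most $n^{k|\mathcal L|}$ choices), $i \leq t$, and $k \leq d$ succeeds because $|V_i| p^k \geq \varepsilon^{1-k} t^{k-1} \log^{2k} n / 16 \gg \log n$. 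For the ``at least one edge'' part, the events $\{L \subset N_{G_2}(u)\}$ over pairs $(L,u) \in \mathcal L \times U$ use pairwise disjoint edge-sets (disjointness of $\mathcal L$ and $L \cap U = \emptyset$), so
$$ \Pr[\mathcal B(\mathcal L, U)\text{ is edgeless}] \;\leq\; \exp\bigl(-(p')^k |\mathcal L||U|\bigr) \;\leq\; \exp\bigl(-\log^{4(d-1)} n / 2^d\bigr), $$
which (for $d \geq 2$) easily dominates the $n^{k|\mathcal L| + |U|}$ union bound over pairs $(\mathcal L, U)$.

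\textbf{Main obstacle.} The only step that is not a mechanical Chernoff-plus-union-bound chase is the greedy clique extraction: one has to verify that after every prefix of the procedure a fresh $K_d$ still lives inside the leftover portion of $G_1[V_0]$. The cleanest route is a second-moment (or Janson) estimate for the total number of $K_d$-copies, combined with a deletion estimate bounding the number of copies killed by removing a given $\varepsilon d n$ vertices. Once this is in place, everything else reduces to independent-Bernoulli concentration absorbed by routine union bounds, which is why splitting into the two independent rounds $G_1$ and $G_2$ is what makes the whole argument straightforward.
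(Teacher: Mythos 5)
Your high-level plan — split $G$ into two independent rounds $G_1\cup G_2$, use $G_1$ to manufacture the clique family $\mathcal K$ for (P1), and use fresh randomness from $G_2$ plus Chernoff and union bounds for all the expansion statements in (P1) and (P2) — is exactly the structure of the paper's proof of Lemma~\ref{lem:random_graph_is_good}. The verifications of (P1) with $G_2$ and both halves of (P2) are essentially identical to the paper's, modulo a few constant-tracking slips (e.g.\ your intermediate claim $m(p')^d\le 1$ can fail since $p'$ may be as large as $p$, and your bound $\mathbb E[Y_v]\ge (p/2)^k|\mathcal L|/2$ is exactly the Chernoff target with no slack; both are easily repaired by working with $(p/2)^k$ throughout and the sharper $1-e^{-x}\ge(1-e^{-1})x$ on $[0,1]$, as the paper implicitly does).

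The genuine gap is in the step you yourself flag as the main obstacle, the greedy extraction of $\mathcal K$. You argue that removing the $\le \varepsilon d n$ vertices used so far destroys ``only an $O(\varepsilon d)$-fraction'' of the $K_d$-copies, because each vertex lies in $O(n^{d-1}(p')^{\binom{d}{2}})$ copies while the total is $\Theta(n^d(p')^{\binom{d}{2}})$. But the exact per-vertex proportion of copies is $\binom{n-1}{d-1}/\binom{n}{d}=d/n$, not $1/n$: a single vertex covers a $d/n$-fraction of all $K_d$-copies. Removing $\varepsilon d n$ vertices therefore kills up to a $\varepsilon d^2$-fraction (with multiplicity), and under the stated hypothesis $\varepsilon<\tfrac{1}{2d}$ this bound is $\varepsilon d^2<d/2\ge 1$ for every $d\ge 2$, so the deletion estimate does not show that any copy survives. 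The greedy could get stuck well before $\varepsilon n$ cliques are found, and the lemma as stated must hold for all $\varepsilon<\tfrac{1}{2d}$, not only for the much smaller $\varepsilon\le 1/(4\Delta^6)$ that appears in the applications. To salvage a self-contained greedy argument you would instead need a uniform statement of the form ``w.h.p.\ every vertex subset of size $\ge n/2$ spans a $K_d$,'' proved by Janson's inequality plus a union bound over $2^n$ subsets (which is feasible since $\mu=\Theta(n^{(d+1)/2}\,\mathrm{polylog}\,n)\gg n$ when $p'=\omega(n^{-1/d}\log^2 n)$). The paper avoids the issue entirely by invoking the Johansson--Kahn--Vu $K_d$-factor theorem \cite{johansson2008factors}: since $p/2=\omega\bigl(n^{-2/d}(\log n)^{1/\binom{d}{2}}\bigr)$, $G_1$ w.h.p.\ contains $\lfloor n/d\rfloor$ vertex-disjoint $d$-cliques, and one simply takes $\varepsilon n\le n/(2d)$ of them. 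Also, a small ordering point: the paper first finds $\mathcal K$ in $G_1$ and only then chooses the partition $V_0,\ldots,V_t$ so that $V(\mathcal K)\subset V_0$, which is slightly cleaner than fixing the partition in advance and hoping the cliques land in $V_0$.
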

\begin{proof}
 Let $\varepsilon \leq \frac{1}{2d}$, let $p =\omega\left( \varepsilon^{-1} t n^{-1/d} \log^2 n\right)$ and let $G \sim \gnp$. Furthermore, let $q\geq p/2$ be such that $1-p=(1-q)^2$, and note that one can expose $G\sim\gnp$ as $G=G_1\cup G_2$, where $G_1$
and $G_2$ are two graphs sampled from $\mathcal G(n,q)$ independently (for more details we refer the reader to \cite{JLR}). We use $G_1$
to find a family of vertex-disjoint $d$-cliques, and then $G_2$ to
ensure the properties (P1) and (P2). For a simpler presentation, we assume from now on that $q$ is exactly $p/2$.


First, expose the edges of $G_1$. Since
$$q=\omega\left(n^{-2/d}(\log n)^{1/\binom{d}{2}}\right),$$
it follows from \cite{johansson2008factors} that $G_1$ contains w.h.p.\ $\lfloor n/d\rfloor$ disjoint $d$-cliques. Let $\mathcal{K}$ be a family of $\varepsilon n$ vertex-disjoint $d$-cliques.  Next, fix an arbitrary
partition $V(G) = V_0 \cup \ldots \cup V_t$ as in
Definition~\ref{def:good_graph}, such that $V(\mathcal{K})\subset
V_0$. Finally, expose $G_2$. We now show that w.h.p.\ this partition satisfies
Properties (P1) and (P2).


For $U\subseteq V(G) \setminus V(\mathcal{K})$ with $|U| \le
(p/2)^{-d}/2$, let
$$X(U):=|\left\{K_d\in \mathcal{K}\middle| K_d\subset N_{G_2}(u) \; \text{for some} \; u\in U \right\}|.$$
Note that $X(U)$ is the sum of i.i.d. indicator random variables
$X_{L}$ ($L\in \mathcal{K}$), such that $X_{L}=1$ iff $L\subseteq
N_{G_2}(u)$ for some $u\in U$. Since $|U|\le (p/2)^{-d}/2$, we have
that for each $L\in\mathcal{K}$,
$$\Pr[X_{L}=0]=(1-(p/2)^d)^{|U|}\le 1-\frac{|U|p^d}{2^d} +\frac{|U|^2p^{2d}}{2^{2d}} \le 1-\frac{|U|p^d}{2^d}(1-1/2) = 1-\frac{|U|p^d}{2\cdot2^d}.$$
(For the first inequality we use the fact that $(1-a)^b\leq
1-ab+(ab)^2$ for any positive integer $b$ and $0<a<1$).

Therefore, we have that $\Pr[X_{L}=1]\geq 2^{-d-1}|U|p^d$, which
implies that
$$\mathbb{E}[X(U)]\geq  2^{-d-1}|U|p^d |\mathcal{K}|\geq\frac{\log^{2d} n}{2^{d+1}}|U|.$$
Using Chernoff's bound we obtain that
$$\Pr\left[X(U)< \frac{p^d}{2^{d+2}}|\mathcal{K}||U|\right] \le 2e^{-\frac{\log^{2d} n}{8 \cdot 2^{d+1}} |U|}\le \frac{2}{n^{3|U|}}. $$
(The last inequality holds since $d>1$).

We can therefore upper bound the probability that there exists a set
$U$ that violates $(P1)$ by the following union bound
$$\sum_{\ell=1}^n \binom{n}{\ell}\frac{2}{n^{3\ell}}=o(1).$$

For property $(P2)$ we first assume that $|\mathcal{L}| \le
(p/2)^{-k} / 2$. Note that $X(\mathcal{L},
V_i):=|N_{\mathcal{B}(\mathcal{L}, V_i)}(\mathcal{L})|$ is the sum
of i.i.d. indicator random variables $X_v$ (for $v \in V_i$), where
$X_v=1$ iff $L \subset N_{G_2}(v)$ for some $L\in \mathcal{L}$.
Since $(p/2)^k|\mathcal{L}| \le 1/2$, using the fact that
$(1-a)^b\leq 1-ab+ (ab)^2/2$ holds for every integer $b$ and any
positive constant $a$ for which $ab<1$ (follows from the binomial
formula), we observe that
$$\mathbb{E}[X(\mathcal{L}, V_i)] \geq|V_i|\left( 1-(1-(p/2)^k )^{|\mathcal{L}|} \right)\geq (1-1/4)(p/2)^k|\mathcal{L}||V_i|.$$
Using Chernoff's bound we obtain that
$$Pr[X(\mathcal{L}, v_i)< (p/2)^k|\mathcal{L}||V_i| / 2]\le \exp\left[- \mathbb{E}[X(\mathcal{L},v_i)] / 36\right]\le \frac{1}{n^{3d|\mathcal{L}|}}, $$
where the last inequality follows as
$$(p/2)^k |V_i|\geq (p/2)^d \cdot \frac{\varepsilon n}{16 t}\geq \frac{n\log^{2d} n}{2^{d+4} n}=\omega(d\log n).$$
Thus, the probability for having sets $\mathcal{L}$ and $V_i$ such that $|N_{\mathcal{B}(\mathcal{L},
V_i)}(\mathcal{L})|< (p/2)^{k} |\mathcal{L}||V_i| / 2$ can be
bounded by
$$t\sum_{\ell=1}^n \binom{\binom{n}{k}}{\ell}\frac{1}{n^{3d\ell}}=o(1).$$

Next, assume that $|\mathcal{L}|\geq(p/2)^{-k} \log^{2(d-1)} n$. Observe that each edge
in $\mathcal{B}(\mathcal{L}, U)$ is present with probability
$(p/2)^k$, hence the probability that there are no edges is bounded
by
$$ (1 - (p/2)^k)^{|\mathcal{L}| |U|} \leq \exp\left[- (p/2)^k \cdot |\mathcal{L}| |U|\right].$$
Furthermore, for $r,\ell\geq (p/2)^{-k} \log^{2(d-1)} n$, the number of
collections of $k$-subsets $\mathcal{L}$ with $|\mathcal{L}|=\ell$
is at most $n^{k\ell}$, and the number of subsets $U$ with $|U|=r$
is at most $n^r$. We thus have that
$$\Pr[\exists \mathcal{L}, U \text{ with } |\mathcal{L}|=\ell, |U|=r \text{ and } e(\mathcal{B}(\mathcal{L},U))=0]\le \exp\left[(k\ell+r)\log n - (p/2)^k\ell r\right].$$
Note that
$$(k\ell +r)\log n \le k \cdot (\ell \log n + r \log n) \le 2 k \cdot \frac{r \ell (p/2)^k}{\log^{2d-3} n} \le (p/2)^k \ell r / 2$$
for $n$ large enough, and hence,
$$\exp\left[(k\ell+r)\log n - (p/2)^k\ell r\right]\le \exp\left[-(p/2)^k \ell r / 2\right]\le \exp\left[- (p/2)^{-k} \log n / 2\right]= o(1).$$
We therefore conclude that the probability for the existence of such
sets $\mathcal{L}$ and $U$ without an edge is $o(1).$
\end{proof}

Now we want to show that any $(n, t, \varepsilon, d)$-good graph is
$\mathcal{F}(n, t, \varepsilon, d)$-universal. Let $G$ be a a $(n, t, \varepsilon, d)$-good graph with
a partition $V(G) = V_0 \cup \dots \cup V_t$ and a clique-set
$\mathcal{K}$. We construct an embedding $f:V(H) \rightarrow V(G)$
for a given graph $H\in \mathcal{F}(n, t, \varepsilon, d)$ as follows.

Let $H=W_0 \cup \dots \cup W_{t}$ be the partition of $H$ that
satisfies the conditions $(i)-(v)$ of Definition~\ref{def:good_graph}. For every $v \in W_t$ let $L(v):=N_G(v)\cap W_0$
denote the neighborhood of $v$ in $W_0$.
Note that since $W_t$ is $3$-independent, we have that  $L(u)\cap L(v)=\emptyset$ for $u\ne v$.
In a first step we choose
an arbitrary injective mapping $f_0 : W_0 \rightarrow
V(\mathcal{K})$ such that for every $w\in W_t$ the vertices in
$L(w)$ all map to vertices of the same clique in $\mathcal{K}$. Such a mapping exists as $\mathcal{K}$ consists of $\lfloor
\varepsilon n \rfloor$ $d$-cliques and there are exactly that many
sets $L(w)$, each of which contains at most $d$ vertices. Moreover, such a mapping is valid as there can not be edges between $L(u)$ and $L(w)$ for $u\ne w$ (because $W_t$ is $3$-independent).

For $i=1, \dots, t$, we iteratively construct $f_i:(W_0\cup\dots\cup
W_i) \rightarrow (V_0 \cup \dots \cup V_i)$ from $f_{i-1}$ as
follows. Let $V^*_i:=(V_0\cup\dots\cup V_{i})\setminus
\textrm{Img}(f_{i-1})$. We want to embed $W_i$ to $V_i^*$. For $w\in W_i$
let $L_i(w):=f_{i-1}(N_H(w)\cap(\cup_{j=0}^{i-1}W_j))$ and let
$\mathcal{L}_{i}:=\left\{L_i(w) \mid w\in W_i\right\}.$ Here it is
crucial that $W_i$ is $2$-independent and therefore $L_i(w)\cap
L_i(w')=\emptyset$ for $w\ne w'\in W_i$. Since a vertex $w\in W_i$
can be mapped only to the vertices in
$$\{v\in V_i^* \mid L_i(w) \subseteq N_G(v)\},$$
we can extend $f_{i-1}$ by a $\mathcal{L}_i$ matching in $B_i:=\mathcal{B}(\mathcal{L}_i, V_i^*)$
(recall that in $B_i$ the set $L_i(w)\in \mathcal{L}_i$ is connected to a vertex $v\in V_i^*$
 if and only if $L_i(w)\subseteq N_G(v)$).
 More precisely, for a matching $\mathcal{M}$ which saturates $\mathcal L_i$ (an $\mathcal L_i$-matching), we define $f_i$ as follows:
 For $w\in W_0\cup\dots\cup W_{i-1}$ let $f_i(w):=f_{i-1}(w)$, and for $w\in W_i$ let $f_i(w):=v$, where $v\in V_i^*$ is the unique vertex such that $\{L_i(w), v\}\in \mathcal{M}$.


As long as we find an $\mathcal{L}_i$-matching for $1\le i\le t$ we
clearly construct a valid embedding of $H$ into $G$. It remains to
show that we can find the required matchings.

We first show that for every $1\le i\le t-1$, the auxiliary graph
$B_i$ contains an $\mathcal{L}_i$-matching.

\begin{claim}
For every $1\leq i\leq t-1$, there exists an
$\mathcal{L}_i$-matching in $B_i$.
\end{claim}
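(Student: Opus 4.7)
The plan is to apply Hall's theorem to $B_i$: it suffices to prove that $|N_{B_i}(\mathcal{S})| \ge |\mathcal{S}|$ for every $\mathcal{S} \subseteq \mathcal{L}_i$. Three preliminary observations simplify the analysis. By Property~$(v)$ of Definition~\ref{def1}, every $L_i(w)$ has at most $d$ elements, so we may partition $\mathcal{L}_i = \bigcup_{k=1}^d \mathcal{L}_i^{(k)}$ according to set size. Since $W_i$ is $2$-independent, the sets of $\mathcal{L}_i$ are pairwise disjoint, and since $\bigcup_{L\in\mathcal{L}_i}L \subseteq \textrm{Img}(f_{i-1}) \subseteq V_0\cup\cdots\cup V_{i-1}$, both $V_i \subseteq V_i^*$ and $V_i^*$ itself are disjoint from $\bigcup_{L\in\mathcal{L}_i}L$, so (P2) is applicable to any uniform-size sub-collection. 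Finally, a short count using $|W_t|=\lfloor\varepsilon n\rfloor$ and $|V_j|=\varepsilon n/(16t)$ for $j\ge 1$ yields the slack
\[
|V_i^*| - |\mathcal{L}_i| \;=\; |V_0\cup\cdots\cup V_i| - |W_0\cup\cdots\cup W_i| \;\ge\; \lfloor\varepsilon n\rfloor - \tfrac{(t-i)\varepsilon n}{16t} \;\ge\; \tfrac{\varepsilon n}{2}.
\]

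Given $\mathcal{S}\subseteq\mathcal{L}_i$, I would choose $k^*\in\{1,\ldots,d\}$ so that $|\mathcal{S}^{(k^*)}|\ge|\mathcal{S}|/d$, where $\mathcal{S}^{(k)}:=\mathcal{S}\cap\mathcal{L}_i^{(k)}$, and then split into three regimes according to $|\mathcal{S}^{(k^*)}|$. If $|\mathcal{S}^{(k^*)}|\le(p/2)^{-k^*}/2$, apply the first sub-property of (P2) directly to $(\mathcal{S}^{(k^*)},V_i)$ to obtain $|N_{B_i}(\mathcal{S})|\ge(p/2)^{k^*}|\mathcal{S}^{(k^*)}||V_i|/2\ge(p/2)^d|\mathcal{S}||V_i|/(2d)$; since $|V_i|=\varepsilon n/(16t)$ and $p=\omega(\varepsilon^{-1}tn^{-1/d}\log^2 n)$, the prefactor $(p/2)^d|V_i|/(2d)=\omega(1)$ pushes this above $|\mathcal{S}|$. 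If instead $(p/2)^{-k^*}/2<|\mathcal{S}^{(k^*)}|<(p/2)^{-k^*}\log^{2(d-1)} n$, I would pass to an arbitrary sub-collection $\mathcal{S}'\subseteq\mathcal{S}^{(k^*)}$ of size $\lfloor(p/2)^{-k^*}/2\rfloor$ and apply the same sub-property to $(\mathcal{S}',V_i)$, which yields $|N_{B_i}(\mathcal{S})|\ge|V_i|/4$; the latter comfortably exceeds $|\mathcal{S}|\le d(p/2)^{-d}\log^{2(d-1)} n$ once we use $\varepsilon\le 1/(2d)$ and the polylogarithmic room in the lower bound on $p$.

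In the remaining regime $|\mathcal{S}^{(k^*)}|\ge(p/2)^{-k^*}\log^{2(d-1)} n$, I would invoke the second sub-property of (P2) with $U:=V_i^*\setminus N_{B_i}(\mathcal{S})$. By construction no edge of $B_i$ joins $\mathcal{S}^{(k^*)}$ and $U$, and $U$ is disjoint from $\bigcup_{L\in\mathcal{L}_i}L$ by the second preliminary observation, so the sub-property forces $|U|<(p/2)^{-k^*}\log^{2(d-1)} n\le(p/2)^{-d}\log^{2(d-1)} n=o(\varepsilon n)$. Combining with the slack bound gives $|N_{B_i}(\mathcal{S})|\ge|V_i^*|-o(\varepsilon n)>|\mathcal{L}_i|\ge|\mathcal{S}|$, which closes Hall's condition and hence produces the desired $\mathcal{L}_i$-matching. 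The main obstacle I anticipate is precisely the mixed-size structure of $\mathcal{L}_i$: property (P2) is stated only for uniform $k$-subsets, so we cannot feed $\mathcal{S}$ directly into it and must pay a factor of $d$ by passing to a dominant size class $\mathcal{S}^{(k^*)}$. Thankfully, the polylogarithmic slack in the hypothesis $p=\omega(\varepsilon^{-1}tn^{-1/d}\log^2 n)$ makes this loss harmless.
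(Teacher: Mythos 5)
Your proof is essentially the same argument as the paper's: verify Hall's condition by partitioning a given subset of $\mathcal{L}_i$ by set-size, pass to a dominant size class $k^*$, and split into three regimes on $|\mathcal{S}^{(k^*)}|$ exactly as in the paper's Claim~\ref{claim:small_Hall} (with the paper proving the slack bound $|V_i^*|-|\mathcal{L}_i|>\tfrac{15\varepsilon n}{16}$ separately and then reducing to that auxiliary claim, which is a cosmetic organizational difference). There is one small gap: you write $\mathcal{L}_i=\bigcup_{k=1}^d\mathcal{L}_i^{(k)}$, silently assuming every $L_i(w)$ is nonempty. Property~$(v)$ only gives $|L_i(w)|\le d$ and permits $L_i(w)=\emptyset$ (a vertex of $W_i$ may have all its $H$-neighbors in $W_{i+1}\cup\cdots\cup W_t$), in which case $k^*$ could equal $0$ and (P2), which is stated for $1\le k\le d$, does not apply. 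The paper dispatches this case up front: if $\mathcal{S}$ contains the empty set, then $N_{B_i}(\mathcal{S})=V_i^*$ outright, and Hall's condition follows from the slack bound. You should add that one line; everything else checks out.
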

\begin{proof}
We show that Hall's condition for the existence of an
$\mathcal{L}_i$-saturating matching is satisfied. First, we show that
$|\mathcal{L}_i|=|W_i|< |V_i^*|-\frac{\varepsilon n}{16}$ for $1\le i\le t-1$. We
have
$$|V_i^*|=|V_0\cup\dots \cup V_i|-|W_0\cup\dots\cup W_{i-1}|=|W_i\cup \dots \cup W_{t} |-|V_{i+1}\cap\dots\cup V_t|$$
and therefore
$$|V_i^*| - |W_i| =|W_{i+1}\cup \dots \cup W_{t} |-|V_{i+1}\cap\dots\cup V_t|\geq |W_t|-\frac{t-i}{16t}\varepsilon n>\frac{15 \varepsilon n}{16}.$$
Thus, we have that $|\mathcal{L}_i|=|W_i|\leq |V_i^*|- \frac{15 \varepsilon n}{16}<|V_i^*|- \frac{\varepsilon n}{16}$ and the claim therefore follows by Claim~\ref{claim:small_Hall} below.
\end{proof}

\begin{claim}
\label{claim:small_Hall} For all $U\subseteq \mathcal{L}_i$ that
satisfy $|U|\le |V_i^*|- \frac{ \varepsilon n}{16}$, we have
$$|N_{B_i}(U)| \geq |U|.$$
\end{claim}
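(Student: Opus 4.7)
The plan is to verify Hall's condition for every $U\subseteq\mathcal{L}_i$ with $|U|\le|V_i^*|-\varepsilon n/16$, using the two parts of property (P2) of the good graph $G$. Before the case analysis, observe that every $L\in\mathcal{L}_i$ has size at most $d$, the sets are pairwise disjoint since $W_i$ is $2$-independent, and $L\subseteq\textrm{Img}(f_{i-1})\subseteq V_0\cup\ldots\cup V_{i-1}$. In particular $V_i\cap(\bigcup_{L\in U}L)=\emptyset$ and $V_i\subseteq V_i^*$, so any lower bound on $|N_{\mathcal{B}(U,V_i)}(U)|$ supplied by (P2) is automatically a lower bound on $|N_{B_i}(U)|$. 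For clarity I will treat sub-collections in which all sets have a common size $k$, $1\le k\le d$; the general case follows by splitting $U$ by $k$ and pigeonholing, or equivalently by padding every set up to size $d$ (since the estimates in (P2) are weakest at $k=d$).

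In the first regime $|U|\le(p/2)^{-k}/2$, the first bullet of (P2) applied with $\mathcal{L}=U$ yields $|N_{B_i}(U)|\ge\tfrac12(p/2)^k|U||V_i|$, and one checks from $|V_i|=\varepsilon n/(16t)$ together with $p=\omega(\varepsilon^{-1}tn^{-1/d}\log^2n)$ that $(p/2)^k|V_i|/2\ge 1$, so $|N_{B_i}(U)|\ge|U|$. For the intermediate range $(p/2)^{-k}/2<|U|<(p/2)^{-k}\log^{2(d-1)}n$, I would use monotonicity: pick $U'\subseteq U$ with $|U'|=\lfloor(p/2)^{-k}/2\rfloor$, apply the preceding case to $U'$, and conclude $|N_{B_i}(U)|\ge|N_{B_i}(U')|\ge|V_i|/8=\varepsilon n/(128t)$. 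Since the hypothesis on $p$ gives $(p/2)^{-k}\log^{2(d-1)}n=o(\varepsilon n/t)$, the upper endpoint of this range lies below $\varepsilon n/(128t)$ for large $n$, so again $|N_{B_i}(U)|\ge|U|$.

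In the remaining regime $|U|\ge(p/2)^{-k}\log^{2(d-1)}n$, the plan is to apply the contrapositive of the second bullet of (P2) with the same $U$ and with $U^\star:=V_i^*\setminus N_{B_i}(U)$ playing the role of the ``$U$'' in (P2). The set $U^\star$ lies in $V_i^*$ and is therefore disjoint from $\bigcup_{L\in U}L$; by construction it has no edge to $U$ in $\mathcal{B}$, so the second bullet forces $|U^\star|<(p/2)^{-k}\log^{2(d-1)}n$. Combining with $|U|\le|V_i^*|-\varepsilon n/16$ and the inequality $(p/2)^{-k}\log^{2(d-1)}n\le\varepsilon n/16$, which again follows from the hypothesis on $p$, we get $|N_{B_i}(U)|=|V_i^*|-|U^\star|\ge|U|$. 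The main delicate point is the bookkeeping that ensures these three regimes together cover every admissible $U$; each required inequality is a direct consequence of $p=\omega(\varepsilon^{-1}tn^{-1/d}\log^2n)$ and $\varepsilon<1/(2d)$.
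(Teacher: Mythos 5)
Your proposal follows essentially the same three-case split as the paper's proof: decompose $U$ by the common size $k$ of its sets, pigeonhole to a size class $U_k$ with $|U_k|\geq|U|/d$, and then handle the small, intermediate, and large regimes with the two bullets of (P2) exactly as you describe (the intermediate range via monotonicity to a truncated $U'$, the large range via the contrapositive of the ``no-edge'' property applied to $V_i^*\setminus N_{B_i}(U)$). The only point you omit is the degenerate case $k=0$: a vertex $w\in W_i$ may have no neighbors among $W_0\cup\dots\cup W_{i-1}$, so $L_i(w)=\emptyset$ and (P2) (which is stated only for $1\le k\le d$) does not apply; the paper disposes of this trivially by observing that if $U_0\ne\emptyset$ then $N_{B_i}(U)=V_i^*$. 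Also, your alternative ``padding to size $d$'' aside would need a word of justification (that padding sets shrinks $\mathcal{B}$-neighborhoods and that one can pad keeping the sets pairwise disjoint and disjoint from $V_i$), but since your primary route is the size-class pigeonhole, this is not a gap.
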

\begin{proof}
Let $U=U_0\cup\ldots\cup U_d$, where
$$U_j:=\left\{L\in U \mid |L|=j \right\}.$$
If $U_0\ne \emptyset $, then $N_{B_i}(U)=V_i^*$. Therefore, we may
assume that $U_0= \emptyset$. Pick $k$ such that $|U_k|\geq |U|/d$. We show that the lemma holds for $n$ large enough by distinguish between the following three cases:

\textbf{Case 1}: $|U_k|\le (p/2)^{-k} / 2$. It follows by property $(P2)$ that
$$|N_{B_i}(U)|\geq |N_{B_i}(U_k)| \geq  (p/2)^{k}|U_k||V_i | / 2\geq \frac{\log^{2d} n }{2^{k+1} \cdot 16 d}|U|\geq |U|.$$

\textbf{Case 2}: $(p/2)^{-k} / 2 \le |U_k| \le (p/2)^{-k}
\log^{2(d-1)}n$. We fix an arbitrary subset $U_k'\subset U_k$ of
size $|U_k'| = (p/2)^{-k} / 2$, and by the same argument as in Case
1 we get that
$$|N_{B_i}(U)|\geq |N_{B_i}(U_k')| \geq (p/2)^{k}|U_k'||V_i | / 2\geq \frac{\log^{2d} n}{2^{k+1} \cdot 16}|U_k'|\geq \frac{\log n}{2^{k+1} \cdot 16 d }|U|\geq |U|.$$

\textbf{Case 3}: $ |U_k|\geq (p/2)^{-k} \log^{2(d-1)}n$. In this case note that the induced subgraph
$B_i[U_k, V_i^*\setminus N_{B_i}(U_k)]$ has no edges. By property
$(P2)$ this yields that
$$|V_i^*\setminus N_{B_i}(U_k)|<  (p/2)^{-k} \log^{2(d-1)} n =o(\varepsilon n),$$
which implies that $|N_{B_i}(U_k)|\geq |V_i^*|- o( \varepsilon n)\geq|V_i^*|- \frac{ \varepsilon n}{16} \geq |U|$.
\end{proof}

In the last lemma of this section we show that $B_t$ contains a perfect
matching, thus we can complete the embedding of $H$.
\begin{lemma}
There exists a perfect matching in $B_t$.
\end{lemma}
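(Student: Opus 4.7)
Since $|\mathcal{L}_t|=|V_t^*|=\lfloor\varepsilon n\rfloor$, Hall's marriage theorem reduces the lemma to verifying that $|N_{B_t}(U)|\geq|U|$ for every $U\subseteq\mathcal{L}_t$. I would split this into two overlapping regimes. For $|U|\leq|V_t^*|-\varepsilon n/16$, the estimate follows verbatim from the proof of Claim~\ref{claim:small_Hall}: that argument relies only on property (P2), is agnostic to the index $i$, and its slack requirement $(p/2)^{-k}\log^{2(d-1)}n\leq\varepsilon n/16$ is still satisfied at step~$t$.

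The new, genuinely harder regime is $|U|>|V_t^*|-\varepsilon n/16$, where (P2) no longer provides enough room and one must invoke the clique property (P1). The structural input is that $f_0$ was built so that each $L\in\mathcal{L}_t$ lies inside a distinct clique $K_L\in\mathcal{K}$, so the assignment $L\mapsto K_L$ is an injection and $\mathcal{K}(U):=\{K_L:L\in U\}$ has $|\mathcal{K}(U)|=|U|$. Crucially, if $v\in V_t^*\setminus V(\mathcal{K})$ and $V(K_L)\subseteq N_G(v)$ for some $L\in U$, then $L\subseteq V(K_L)\subseteq N_G(v)$, whence $v\in N_{B_t}(U)$. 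Therefore every vertex of $S:=(V_t^*\setminus V(\mathcal{K}))\setminus N_{B_t}(U)$ fails to be a common neighbor of any clique in $\mathcal{K}(U)$.

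Assume for contradiction that $|N_{B_t}(U)|<|U|$. A direct count gives $|S|>\varepsilon n-|U|-|V_t^*\cap V(\mathcal{K})|$. Pick $S'\subseteq S$ of size $\min(|S|,(p/2)^{-d}/2)$ and apply (P1): at least $\tfrac{p^d\varepsilon n}{2^{d+2}}|S'|$ cliques of $\mathcal{K}$ are fully contained in $N_G(v)$ for some $v\in S'$, and by the preceding paragraph none of them can belong to $\mathcal{K}(U)$. Hence
$$\varepsilon n-|U|=|\mathcal{K}|-|\mathcal{K}(U)|\geq\tfrac{p^d\varepsilon n}{2^{d+2}}|S'|.$$
If $|S'|=(p/2)^{-d}/2$, the right-hand side equals $\varepsilon n/8$, forcing $|U|\leq 7\varepsilon n/8$ and contradicting $|U|>15\varepsilon n/16$. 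If instead $|S'|=|S|$, inserting the lower bound for $|S|$ and using $p^d\varepsilon n=\omega(1)$ collapses the inequality to $1\geq\tfrac{p^d\varepsilon n}{2^{d+2}}(1-o(1))$, again a contradiction.

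The delicate point, which is the main obstacle to executing the plan cleanly, is the implicit requirement that $|V_t^*\cap V(\mathcal{K})|\ll\varepsilon n-|U|$: property (P1) only controls test sets disjoint from $V(\mathcal{K})$, so one needs to know that by step $t$ almost all of $V(\mathcal{K})$ has been absorbed into $\textrm{Img}(f_{t-1})$. This can be arranged during the earlier matching steps $f_1,\dots,f_{t-1}$ by preferring, whenever Claim~\ref{claim:small_Hall} leaves room, to match vertices of $W_i$ onto vertices of $V(\mathcal{K})\setminus f_0(W_0)$; the $\varepsilon n/16$ slack guaranteed at every intermediate step is precisely what enables this preferential selection.
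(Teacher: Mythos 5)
Your high-level strategy is the same as the paper's: use Claim~\ref{claim:small_Hall} (and hence (P2)) for $|U|\le |V_t^*|-\varepsilon n/16$, then invoke (P1) and the clique structure of $f_0$ for the complementary range. Your large-$U$ case is a minor reorganization — you count cliques activated by the set $S$ of non-neighbors outside $V(\mathcal{K})$, while the paper phrases it as bounding $|N_{B_t}(V_t^*\setminus N_{B_t}(U))|$ and contradicting $|U|+|N_{B_t}(V_t^*\setminus N_{B_t}(U))|\le|V_t^*|$ — but the arithmetic is identical ($\varepsilon n/8$ versus $\varepsilon n/16$, and $2^{-d-3}(p/2)^d\varepsilon n\gg 1$). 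So far so good, and you are in fact more careful than the paper in one respect: you note explicitly that (P1) only quantifies over $U\subset V(G)\setminus V(\mathcal{K})$ and therefore restrict your test set $S'$ accordingly, whereas the paper's proof applies (P1) to $Y\subset V_t^*\setminus N_{B_t}(U)$ without verifying $Y\cap V(\mathcal{K})=\emptyset$, even though $V_t^*$ is entirely capable of intersecting $V(\mathcal{K})$ (the map $f_0$ is injective but need not be onto $V(\mathcal{K})$, and the intermediate $\mathcal{L}_i$-matchings need not absorb the leftover clique vertices).

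However, the fix you sketch for this delicate point does not hold up. First, the needed condition $|V_t^*\cap V(\mathcal{K})|\ll \varepsilon n-|U|$ is extremely demanding: $|U|$ ranges up to $\lfloor\varepsilon n\rfloor-1$, so you effectively need $V_t^*\cap V(\mathcal{K})=\emptyset$, not merely ``small''. Second, the claim that the $\varepsilon n/16$ slack at each intermediate step lets you ``prefer'' to match $W_i$ onto $V(\mathcal{K})\setminus f_0(W_0)$ is not justified. The intermediate matchings are constrained by the adjacency structure of $B_i$; surplus in Hall's condition on the $\mathcal{L}_i$-side does not imply the existence of an $\mathcal{L}_i$-saturating matching that hits a prescribed set $T\subseteq V_i^*$ (consider $T$ consisting of vertices with no $B_i$-neighbors at all — nothing excludes this a priori). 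A genuine repair has to work differently: for instance, strengthen (P1) to allow $U$ to meet $V(\mathcal{K})$ by excluding, for each $u\in U\cap V(\mathcal{K})$, only the single clique containing $u$ (the random-graph proof of (P1) goes through with negligible change); or exploit the deterministic fact that every $v\in V_t^*\cap V(\mathcal{K})$ is automatically a $B_t$-neighbor of the unique $L\in\mathcal{L}_t$ assigned to its own clique, and run a more careful Hall check that handles $V_t^*\cap V(\mathcal{K})$ and $V_t^*\setminus V(\mathcal{K})$ separately. As written, your proof correctly identifies the obstacle but does not close it.
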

\begin{proof}
We check Hall's condition for every subset $U\subseteq
\mathcal{L}_t.$ For sets of cardinality $|U|\le |V_t^*|- \frac{ \varepsilon n}{16}$,
Hall's condition follows by Claim~\ref{claim:small_Hall}. Therefore,
consider only subsets $U$ of cardinality $|U|\geq |V_t^*|- \frac{ \varepsilon n}{16}$.
Let $U\subseteq \mathcal{L}_t$ be such a subset. Note that by the
definition of the partial embedding $f_0$, every set in $U$ is contained in one of the
cliques in $\mathcal{K}$. Suppose first that $|V_t^*\setminus
N_{B_t}(U)|\geq (p/2)^{-d} / 2$. We fix a subset $Y\subset
V_t^*\setminus N_{B_t}(U)$ of size exactly $(p/2)^{-d} / 2$. It
follows by property $(P1)$ that at least $2^{-d-2}\cdot
p^d|Y|\varepsilon n$ of the cliques in $\mathcal{K}$ are completely
connected to some vertices in $Y$. We conclude that
$$|N_{B_t}(V_t^*\setminus N_{B_t}(U))| \geq |N_{B_t}(Y)|\geq 2^{-3}(p/2)^d  (p/2)^{-d} \cdot \varepsilon n > \varepsilon n/ 16,$$
which is not possible since $|U|+|N_{B_t}(V_t^*\setminus
N_{B_t}(U))|\le |V_t^*|$.

Therefore, we conclude that $|V_t^*\setminus N_{B_t}(U)|\le
(p/2)^{-d} / 2$. Now, using Property $(P1)$ similarly as above we obtain
that
$$|N_{B_t}(V_t^*\setminus N_{B_t}(U))|\geq 2^{-d-3}(p/2)^d \cdot \varepsilon n|V_t^*\setminus N_{B_t}(U)| >|V_t^*\setminus N_{B_t}(U)|.$$
Finally, since
$$|N_{B_t}(U)|+|V_t^*\setminus N_{B_t}(U)|=|V_t^*|=|W_t|\geq |U|+ |N_{B_t}(V_t^*\setminus N_{B_t}(U))|> |U|+ |V_t^*\setminus N_{B_t}(U)|$$
we get $|N_{B_t}(U)|>|U|$.
\end{proof}

\section{Proof of Theorem~\ref{rainbow}}
\label{sec:uni_proof2}
In this section we prove Theorem~\ref{rainbow}. Before starting the
proof, it will be convenient to introduce the following notation. For any bipartite graph $G=(A\cup B, E)$ with $|A|=|B|=n$ and minimum degree $\delta(G)\geq k$,
let $\mathcal B^{\ell}_{k-out}(G)$ denote the following set
of bipartite graphs: each $D \in \mathcal B^{\ell}_{k-out}(G)$ has vertex set $V(D)=V(G)$ and edge set $E(D)\subseteq E$ such that each vertex in $A$ has degree exactly $k$. Note that we can sample an element from $\mathcal B^{\ell}_{k-out}(G)$ uniformly at random by choosing for each $v\in A$ uniformly at random $k$ edges from $E_G(v, B)$.

One of the main ingredients in the proof of Theorem~\ref{rainbow} is
the following simple lemma on the existence of perfect matchings
in typical graphs from $\mathcal B^{\ell}_{k-out}(G)$.

\begin{lemma}\label{lem:matching}
Let $\varepsilon>0$, let $n$ be a sufficiently large integer and let $k=\omega(\log n)$. Then for any bipartite graph $G=(A\cup B, E)$ with $|A|=|B|=n$ and $\delta(G)\geq \frac{n}{2}+\varepsilon n$, a graph $D$
chosen uniformly at random from $\mathcal B^{\ell}_{k-out}(G)$ w.h.p.\
contains a perfect matching.
\end{lemma}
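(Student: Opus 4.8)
I want to show that a uniformly random $D \in \mathcal B^{\ell}_{k-out}(G)$ contains a perfect matching w.h.p., using Hall's theorem. Since every vertex in $A$ has degree exactly $k = \omega(\log n)$ in $D$, the natural strategy is to verify Hall's condition by splitting into small, medium, and large sets $S \subseteq A$, exactly as in the proof of Claim~\ref{claim:small_Hall} above. The key probabilistic input is that for a fixed set $T \subseteq B$ that is "too small", it is very unlikely that many vertices of $A$ have all their $k$ chosen edges landing inside $T$.

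\textbf{Step 1 (the bad event for a fixed pair).} Fix $S \subseteq A$ and a set $T \subseteq B$ with $|T| < |S|$; I want to bound the probability that $N_D(S) \subseteq T$. For a single $v \in A$, since $\deg_G(v) \geq n/2 + \varepsilon n$ and the $k$ edges at $v$ are chosen uniformly without replacement from $E_G(v,B)$, the probability that all $k$ chosen neighbours lie in $T$ is at most $\left(\tfrac{|T|}{n/2+\varepsilon n}\right)^k \le \left(\tfrac{|T|}{n/2}\right)^k$ (if $|T| \ge n/2 + \varepsilon n$ this is useless, but then $T$ is already large and handled separately). Since the choices at distinct vertices of $A$ are independent, $\Pr[N_D(S) \subseteq T] \le \left(\tfrac{2|T|}{n}\right)^{k|S|}$.

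\textbf{Step 2 (union bound over small sets).} If Hall's condition fails, there is an inclusion-minimal violating set $S$ with $|N_D(S)| = |S| - 1 =: m-1$, so we may take $T = N_D(S)$ of size exactly $m-1$. Union-bounding over all choices of $S$ (at most $\binom{n}{m}$) and $T$ (at most $\binom{n}{m-1}$) gives failure probability at most
\[
\sum_{m=1}^{n} \binom{n}{m}\binom{n}{m-1}\left(\frac{2(m-1)}{n}\right)^{km}.
\]
For $m \le n/4$ the factor $(2(m-1)/n)^{km} \le (1/2)^{km}$ crushes the $\binom{n}{m}^2 \le n^{2m}$ term because $k = \omega(\log n)$; this is where the hypothesis $k = \omega(\log n)$ is essential, and the sum over this range is $o(1)$.

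\textbf{Step 3 (medium and large sets).} For $n/4 \le m \le (1-\varepsilon)n$, I instead bound $|N_D(S)| \geq |S|$ directly: any set $S$ of this size already has $\delta(G) \geq n/2 + \varepsilon n$, so even $G$-neighbourhoods are huge; a cleaner route is to observe that for $|S| \geq n/4$, if $|N_D(S)| < |S|$ then the complement $T = B \setminus N_D(S)$ has size $> n - |S|$, and every vertex of $S$ avoids $T$; since $|S| \ge n/4$ and each $v \in S$ misses $T$, we need $\deg_G(v) \le n - |T| < |S| \le n/2+\varepsilon n$... so I should instead fix a small complement $Y \subseteq B \setminus N_D(S)$ of size $(p/2)^{-d}$-analogue — more simply, for large $S$ one shows $N_D(S) = B$ w.h.p.\ by the same first-moment bound applied to $T = B \setminus \{b\}$ for each $b \in B$, giving $\Pr[\exists b : b \notin N_D(S)] \le n \cdot (1 - \tfrac{1}{n/2})^{k|S|} = n\,e^{-\Omega(k)} = o(1)$ once $|S| \ge n/4$ and $k = \omega(\log n)$. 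Combining the three ranges and taking a union bound over all of them, Hall's condition holds w.h.p., so $D$ has a perfect matching.

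\textbf{Main obstacle.} The delicate point is the small-set range (Step 2): the binomial coefficients contribute $n^{\Theta(m)}$ while the "bad" probability per vertex is only polynomially small in general, so one genuinely needs $k = \omega(\log n)$ to beat the entropy term — a constant $k$ would fail. One must also be careful that the $k$ edges at each vertex are chosen \emph{without} replacement, but since we only need an upper bound on $\Pr[\text{all }k\text{ land in }T]$ and sampling without replacement only decreases this probability relative to with-replacement, the naive bound $(|T|/\deg_G(v))^k$ is valid. Everything else is a routine three-case Hall argument.
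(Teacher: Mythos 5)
Your framework (verify Hall's condition by splitting on $|S|$) is the right one, and Steps~1--2 are sound, but Step~3 has two genuine gaps, and the regime $|S| > (1-\varepsilon)n$ is never treated at all. First, the claim that $N_D(S) = B$ holds w.h.p.\ for every $S\subseteq A$ of size $\ge n/4$ is false: a vertex $b\in B$ may have up to $n/2 - \varepsilon n$ non-neighbours in $A$, and a set $S$ of size $n/4$ can consist entirely of such non-neighbours, making $b\notin N_D(S)$ deterministic. Even when every $b$ does have $G$-neighbours in $S$, the per-set failure probability you compute, $n\cdot e^{-\Omega(k)} = n^{-\omega(1)}$, is nowhere near small enough to survive a union bound over the $2^{\Theta(n)}$ sets $S$ of size $\ge n/4$. (The parenthetical deterministic step ``$\deg_G(v)\le n-|T|$'' is also invalid: $v$ missing $T$ in $D$ constrains only $v$'s $k$ chosen edges, not its $G$-degree; $v$ may well have many $G$-neighbours in $T$ that were simply not picked.)

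The cure is not to throw away the $\varepsilon n$ in your own Step~1 bound, because that is the only place the minimum-degree hypothesis enters quantitatively. For $|T| = |S|-1 \le n/2$ the sharp estimate $\bigl(|T|/(n/2+\varepsilon n)\bigr)^k \le (1-\varepsilon')^k$ with $\varepsilon' := \varepsilon/(1/2+\varepsilon)$ is uniformly bounded away from $1$, and with it your Step~2 union bound closes for \emph{all} $m\le n/2$: for small $m$ use $\binom{n}{m}^2\le n^{2m}$ against $(1-\varepsilon')^{km}$, and for $m=\Theta(n)$ use $\binom{n}{m}^2\le 4^n$ against $(1-\varepsilon')^{km}=e^{-\Omega(kn)}=e^{-\omega(n\log n)}$. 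For $|S|>n/2$ one must pass to complements and verify Hall from the $B$-side for sets of size $\le n/2$, which genuinely requires a different estimate since the $D$-degrees on the $B$-side are not fixed to $k$. This is exactly what the paper's proof does: for $S\subseteq B$ with $|S|\le n/2$ it observes $|E_G(A\setminus S',S)|\ge \varepsilon n|S|$ and bounds $\Pr[E_D(A\setminus S',S)=\emptyset]\le \prod_{a\in A\setminus S'}\bigl(1-|N_G(a)\cap S|/n\bigr)^k\le e^{-\varepsilon k|S|}$, which beats the entropy. Your proposal is silent on this half of the argument.
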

\begin{proof}
Let $D$ be a graph chosen uniformly at random from $\mathcal B^{\ell}_{k-out}(G)$. We show that w.h.p.\ all subsets $S\subset A$ and all subsets $S\subset B$ with $|S|\le n/2$ satisfy $|S|\le |N_D(S)|$. It then follows from Hall's theorem (see \cite{West} for more details) that $D$ has a perfect matching.

We first assume that $S\subset A$. Note that $|S|>|N_D(S)|$ implies that there exists a subset $S'\subset B$ of size $|S'|=|S|-1$ such that $|E_D(S, B\setminus S')|=0$. Note that in $G$, since $|S'|\le n/2$, every vertex $v\in S$ has at least $\varepsilon n$ neighbors in $B\setminus S'$. Therefore, when choosing the $i$-th of the $k$ edges incident to $v$ and conditioning on the event that no edge in $E_G(v, B\setminus S')$ has been selected so far, the probability to miss $B\setminus S'$ is at most
$$\frac{\deg_G(v)-\varepsilon n -i+1}{\deg_G(v)-i+1}\le 1-\varepsilon.$$
Thus,
\begin{align*}
\Pr\left[|S|>|N_D(S)|\right]& \le  \Pr\left[\exists S'\subset B\middle| |E_D(S, B\setminus S')|=0\right]\\
&\le \binom{n}{|S|-1}(1-\varepsilon)^{|S|k}\le e^{-\varepsilon |S|\cdot  \omega(\log n)},
\end{align*}
and the probability that such a bad set exists is at most
$$\sum_{s=1}^{n/2}\binom{n}{s}   e^{-\varepsilon s\cdot  \omega(\log n)}\le \sum_{s=1}^{n/2} e^{- s\cdot  \omega(\log n)}=o(1).$$

Next, assume that $S\subset B$ and observe that in order to have $|S|>|N_D(S)|$, there must exist a set $S'\subset A$ of size $|S|-1$ such that $|E_D(A\setminus S', S)|=0$. Note that $|E_G(A,S)|\geq |S|\cdot \left(\frac{n}{2}+\varepsilon n\right)$, $|E_G(S', S)|\le |S| \cdot |S'|\le |S|\cdot \frac{n}{2}$ and therefore $|E_G(A\setminus S',S)|\geq |S|\varepsilon n $. Since every edge of $G$ appears in $D$ with probability at least $\frac{k}{n}$ (but not independently) and since this probability can only decrease if we know that another edge does not appear in $D$, it follows that
\begin{align*}
\Pr[|S|>|N_D(S)|]&\le \Pr\left[\exists S'\subset A\middle| |E_D(A\setminus S',S)|=0\right] \\
&\le \binom{n}{|S|-1}\left(1-\frac{\omega(\log n)}{n}\right)^{|S|\varepsilon n}\le  e^{-\varepsilon |S|\cdot  \omega(\log n)}
\end{align*}
as in the previous case.
\end{proof}

Now we are ready to prove Theorem \ref{rainbow}.

\begin{proof} Our proof is motivated by ideas of Cooper and Frieze \cite{CF}. Note that containing a rainbow copy of some fixed graph $H$ is a monotone increasing property and we can therefore fix $p$ to exactly $n^{-1/d}\log^{5/d}n$.

Let $\Delta$ and $d$ be positive integers, let $n$ be a sufficiently
large integer and let $H\in \mathcal H(n,\Delta,d)$. Moreover, let
$\bar{d}=\frac{2|E(H)|}{n}$ denote the average degree of $H$ (note that
$\bar{d}\leq d$ and in fact can be much smaller than $d$) and let
$\alpha>0$ be some arbitrarily small positive constant. First, we show how
to partition $H$ in such a way that will later help us to find a
rainbow copy of it in a typical member of $\gnpk$, where
$c=(1+\alpha)|E(H)|$. For this aim we act as follows. If $H$
contains a set $W$ of $\lceil \frac{\alpha n}{5\log^2 n}\rceil$ isolated vertices (that is,
vertices of degree $0$ in $H$), then partition $V(H)=\{w_1\}
\cup \ldots \cup \{w_t\} \cup W$ in such a way that for each $i$, the vertex
$w_i$ has at most $d$ neighbors in $\{w_1,\ldots,w_{i-1}\}$. Indeed,
such a partition exists since $H':=H-W\in \mathcal
H(n-|W|,\Delta,d)$, and therefore is $d$-degenerate, so one can apply
Observation~\ref{Degeneracy}. Otherwise, let $x$ denote the number of vertices of degree larger than $0$ and at most
$\bar{d}$ in $H$. Since $H$ contains at most
$ \frac{\alpha n}{5\log^2 n}$ isolated vertices, the following inequality holds:

$$\bar{d}n=2|E(H)|\geq x+(\bar{d}+1)\left(n- \frac{\alpha n}{5\log^2 n}-x\right).$$

Hence, using the fact that
$n$ is sufficiently large, we conclude that $x\geq n/(2\bar{d})$.
Now, let $S$ be the set consisting of all these vertices. By applying
Lemma~\ref{IndependentSetInGraphWithBoundedDegree} to $H$ and $S$ it
follows that there exists a subset $T\subseteq S$, such that $T$ is
$2$-independent and
$$|T|\geq \frac{|S|}{\bar{d}\Delta^2}\geq
\frac{n}{2\bar{d}^2\Delta^2}\geq \left\lceil \frac{\alpha n}{5\log^2 n}\right\rceil$$
 for sufficiently large $n$. Next, let $W\subseteq T$ be an arbitrary subset of size $\lceil \frac{\alpha n}{5\log^2 n}\rceil$,
and partition $V(H)=\{w_1\} \cup \ldots \cup \{w_t\} \cup W$ in such
a way that for each $i$, $w_i$ has at most $d$ neighbors in
$\{w_1,\ldots,w_{i-1}\}$.

All in all, we have a partition $V(H)=\{w_1\} \cup \ldots \cup
\{w_t\} \cup W$ such that $|W|=\lceil \frac{\alpha n}{5\log^2 n}\rceil$ and one of the
following holds:
\begin{enumerate}[$(1)$]
\item all the vertices of $W$ are isolated in $H$, or
\item $W$ is $2$-independent and consists of non-isolated vertices of degree at most $\bar{d}$.
\end{enumerate}

Note that if (2) holds then
\begin{equation}
\label{eq:few_final_edges}
|E(W,V\setminus W)|\leq \bar{d}|W|=\frac{2|E(H)|}{n}\cdot \left\lceil \frac{\alpha n}{5\log^2 n}\right\rceil <\alpha |E(H)|/(2\lceil\log^2 n\rceil),
\end{equation}
for $n$ large enough.

Now we start to describe the procedure of finding a rainbow copy of
$H$. Let $q\geq p/2$ be such that $1-p=(1-q)^2$ and present
$G\sim\gnp$ as $G=G_1\cup G_2$, where $G_1$ and $G_2$ are two graphs
sampled independently from $\mathcal G(n,q)$. We sample a member of
$\gnpk$ by sampling a member of $\gnp$ and randomly coloring exposed
edges using $c$ colors.

We find a rainbow embedding of $H$ in $G\sim \gnpk$ in two phases.
In Phase I, we find a rainbow embedding $f$ of $H[\{w_1\cup \ldots
\cup w_{t}\}]$ with edges which are taken from $G_1$. If $W$ is as
in $(1)$ (that is, all the vertices in $W$ are isolated in $H$),
then we are done. Otherwise, in Phase II we show that one can extend
$f$ to a rainbow embedding of $H$ in $G$, using edges of $G_2$.

In what follows, we present the exact strategies of Phases I and II
and prove that w.h.p.\ everything works out well.

{\bf Phase I:} Throughout this phase we maintain a partial rainbow
embedding $f$ of $H$ to $G_1$, a set of \emph{available colors}
$\mathcal C$ and a set of \emph{available vertices} $V'$. Initially,
set $f=\emptyset$, $\mathcal C :=[c]$ and $V':=V(G)$. Additionally, we maintain for each
vertex $v\in V(G)$ a set $U_v \subseteq V(G)$ such that $U(v)\cap V'$ contains only \emph{unexposed}
potential neighbors of $v$ in $G_1$. Initially, $U_v=V(G)\setminus
\{v\}$ for each $v\in V(G)$.

We inductively build the desired partial embedding $f$ as follows.
In the first step, let $f(w_1) := v$ for an arbitrary vertex $v \in
V'$, and set $V':=V'\setminus \{v\}$. Assume that we have already
embedded $\{w_1, \ldots, w_{i-1}\}$ for some $2 \leq i \leq t$ and
we wish to embed $w:=w_i$. Let $L(w_i) = f(N_H(w_i) \cap \{w_1,
\ldots, w_{i-1}\})$ be the set of images of neighbors of $w_i$ which
have already been embedded (recall that  $|L(w_i)| \leq d$). Let
$A_w = V' \cap \left(\cap_{v \in L(w_i)} U_v\right)$ be the set of
all available vertices which are still unexposed neighbors of all
vertices in $L(w_i)$, and choose an arbitrary subset $S_w \subset
 A_w$ of size $s:= \lceil \alpha n / (4\Delta \log n)^2 \rceil$
(Claim~\ref{claim1:rainbow} shows that throughout Phase I this is
indeed possible; that is, $A_w$ is of size at least $s$). Expose all
edges between $L(w_i)$ and $S_w$, and assign uniformly at random
colors to all the obtained edges. Let $x \in S_w$ be a vertex which
is connected to all the vertices in $L(w_i)$ and such that all the
colors assigned to edges $\{ vx \mid v\in L(w_i)\}$ are distinct and belong to $\mathcal C$. The existence of such a vertex
follows from Claim~\ref{claim2:rainbow} below. We extend $f$ by
defining $f(w_i):=x$, update $U_v:=U_v\setminus S_w$ for all $v \in L(w_i)$, $V':=V'\setminus \{x\}$ and
$$\mathcal{C}:=\mathcal C\setminus\{col \in \mathcal{C} \mid \exists
v \in L(w_i) \text{ such that } vx \text{ is colored in }
col\}.$$

The following two claims show that w.h.p.\ we manage to find the
desired embedding in Phase~I.

\begin{claim}\label{claim1:rainbow}
Throughout Phase I we have that $|A_w| \geq  \lceil \alpha n / (4\Delta \log n)^2 \rceil$ for every vertex $w\in V(H)$ which has not been embedded.
\end{claim}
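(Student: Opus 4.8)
The claim is a counting statement about the set $A_w$ of vertices still available for embedding $w$, which, by definition, are the vertices in $V'$ that remain unexposed potential neighbours of every vertex in $L(w_i)$. I would prove it by bounding how much $V'$ and the sets $U_v$ can have shrunk over the course of Phase I, and then subtracting these losses from the trivial lower bound $|V(G)| = n$.

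**Step 1: Bound the loss in $V'$.** Each step of Phase I removes exactly one vertex from $V'$, and Phase I runs for $t \le n$ steps, so certainly $|V'| \ge n - t \ge n - n = 0$ is too weak; instead I would note that at the moment we wish to embed $w = w_i$ we have removed only $i - 1 \le n$ vertices. That is still not enough on its own, so the real point is Step 2 together with the observation that the quantity we actually need to lower-bound, $|A_w| = |V' \cap \bigcap_{v \in L(w_i)} U_v|$, is $n$ minus the union of the ``bad'' sets: the $i-1 \le n$ vertices removed from $V'$, together with $\bigcup_{v \in L(w_i)} (V(G) \setminus U_v)$.

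**Step 2: Bound the loss in each $U_v$.** Fix $v \in L(w_i)$. The set $U_v$ shrinks only when $v$ plays the role of a previously-embedded neighbour, i.e.\ when $v \in L(w_j)$ for some earlier step $j$; each such step removes at most $|S_w| = s = \lceil \alpha n / (4\Delta \log n)^2 \rceil$ vertices from $U_v$. Since $v$ is the image of some vertex of $H$ and $H$ has maximum degree $\Delta$, the vertex $v$ can lie in $L(w_j)$ for at most $\Delta$ distinct steps $j$. Hence $|V(G) \setminus U_v| \le \Delta s$. As $|L(w_i)| \le d \le \Delta$, the total loss from all the $U_v$'s is at most $d \cdot \Delta s \le \Delta^2 s$.

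**Step 3: Combine.** Putting the pieces together,
\[
|A_w| \;\ge\; n - (i-1) - \Delta^2 s \;\ge\; n - (i-1) - \Delta^2 \left( \frac{\alpha n}{(4\Delta \log n)^2} + 1 \right).
\]
The main obstacle is that the naive bound $(i-1) \le n$ on the number of previously-embedded vertices is useless; to make the arithmetic close one must use that the number of non-isolated vertices embedded so far is controlled, or — more cleanly — one should track $|V'|$ more carefully. In fact the honest accounting is: the number of vertices removed from $V'$ up to the point of embedding $w_i$ is $i - 1$, but what we really need is that $|V'| \ge |W| + (\text{vertices not yet embedded})$, and since $|W| = \lceil \alpha n / (5\log^2 n)\rceil$ is tiny, the dominant term left in $A_w$ after all removals is on the order of $\alpha n / \log^2 n$, which comfortably exceeds $s = \Theta(\alpha n / (\Delta \log n)^2)$ for $n$ large. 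So the key quantitative step — and the one requiring care — is verifying $(i-1) + \Delta^2 s \le n - s$, equivalently that the number of vertices already removed from play plus one more batch of size $\Delta^2 s$ still leaves room for a batch $S_w$ of size $s$; this follows because at every stage at least $|W| \ge \lceil \alpha n/(5\log^2 n) \rceil$ vertices of $V'$ are untouched and $|W|$ dominates $(\Delta^2+1)s$ for large $n$.
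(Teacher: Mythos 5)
Your proof is correct and uses the same ingredients as the paper's: $U_v$ can lose at most $\Delta s$ vertices because $v$ lies in at most $\Delta$ sets $L(w_j)$, the intersection loses at most $|L(w)|\cdot\Delta s \le \Delta^2 s$ further vertices, and the crucial lower bound $|V'|\ge |W| = \lceil\alpha n/(5\log^2 n)\rceil$ (since $W$ is never embedded in Phase~I) dominates $(\Delta^2+1)s$ for large $n$. The mid-proof hesitation about $n-(i-1)$ resolves to exactly the paper's fourth observation, so this is essentially the paper's argument.
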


\begin{proof} The proof of the claim is obtained from the following four observations. First, note that at the beginning of Phase I we have that
$U_v=V(G)\setminus \{v\}$ for each $v\in V(G)$. Second, we update
$U_v$ only after embedding a vertex $w$ for which $v \in
L(w)$ (and then we delete the set $S_w$ which is of size $s =
 \lceil \alpha n / (4\Delta \log n)^2 \rceil$ from $U_v$). Third, every vertex $v$ is
a member of at most $\Delta$ sets $L(w)$ (recall that $\Delta(H)
\leq \Delta$). Fourth, note that $|V'|\geq \lceil \alpha n /(5\log^2 n)\rceil $ throughout
Phase I (recall that we do not embed $W$ in this phase).

Therefore, it follows that at any point during Phase I we have
$$|U_v\cap V'|\geq |V'|-1-\Delta\cdot\left\lceil\frac{\alpha n}{ (4 \Delta\log n)^2}\right\rceil,$$
for each vertex $v \in V(G)$. Since $|L(w)|\leq \Delta$, we conclude that
$$|A_w|=|V'\cap \left(\cap_{v\in
L(w)}U_v\right)| \geq |V'| - \Delta - \Delta^2\left\lceil\frac{\alpha n}{(4 \Delta \log n )^2}\right\rceil \geq  s,$$
for $n$ large enough.
\end{proof}

The next claim states that whenever we wish to embed a vertex $w$,
it has at least one candidate in $V'$.

\begin{claim}\label{claim2:rainbow}

Let $w\in V(H)\setminus W$. At the
moment we try to embed $w$ there exists with probability $1-o(1/n)$ a vertex $x\in S_w$ for which the following holds:
\begin{enumerate}[$(i)$]
\item $x$ is connected to all the vertices in $L(w)$,
and
\item all the colors assigned to the edges $\{\{v,x\}:v\in L(w)
\}$ are distinct and belong to $\mathcal
C$.
\end{enumerate}
\end{claim}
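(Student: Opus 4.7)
The plan is to lower bound the probability that a fixed $x\in S_w$ satisfies conditions (i) and (ii) by roughly $q^{|L(w)|}$ times a positive constant, and then to exploit the mutual independence of these events across $x\in S_w$ to boost the existence probability to $1-o(1/n)$.

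Set $k=|L(w)|\leq d$; if $k=0$ the claim is vacuous, so assume $k\geq 1$. A crucial preliminary observation is that throughout Phase~I the set $\mathcal{C}$ of available colors remains a constant fraction of $c$. Indeed, the rainbow condition ensures that each embedded edge receives a new color, so after embedding $w_1,\ldots,w_{i-1}$ we have used at most $|E(H[\{w_1,\ldots,w_{i-1}\}])|\leq |E(H)|$ colors, giving
$$|\mathcal{C}|\geq c-|E(H)|=\alpha|E(H)| \quad \text{and hence}\quad \frac{|\mathcal{C}|}{c}\geq \frac{\alpha}{1+\alpha}.$$

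By the construction of $A_w$ and $S_w$, none of the edges in $E_G(L(w),S_w)$ have been exposed at this moment; they are exposed (together with their colors) precisely in this step. Consequently, conditional on the history of the algorithm, for each $x\in S_w$ the edges $\{vx : v\in L(w)\}$ appear in $G_1$ independently with probability $q$, and when present receive i.i.d.\ uniform colors from $[c]$. Writing $E_x$ for the event that $x$ satisfies (i) and (ii), a direct calculation gives
$$\Pr[E_x]=q^{k}\cdot \frac{|\mathcal{C}|(|\mathcal{C}|-1)\cdots(|\mathcal{C}|-k+1)}{c^{k}} \geq (p/2)^{d}\left(\frac{\alpha}{2(1+\alpha)}\right)^{d}=\Omega(n^{-1}\log^{5}n),$$
using $k\leq d=O(1)$ and $p=n^{-1/d}\log^{5/d}n$. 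Moreover, the events $\{E_x\}_{x\in S_w}$ depend on pairwise disjoint sets of random variables (distinct endpoints $x$), and are therefore mutually independent conditional on the history.

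Combining, the probability that no $x\in S_w$ satisfies both (i) and (ii) is at most
$$\bigl(1-\Omega(n^{-1}\log^{5}n)\bigr)^{|S_w|}\leq \exp\!\left(-\Omega(n^{-1}\log^{5}n)\cdot \frac{\alpha n}{(4\Delta\log n)^{2}}\right)=\exp(-\Omega(\log^{3}n))=o(1/n),$$
since $|S_w|=\lceil \alpha n/(4\Delta\log n)^{2}\rceil$ and $\Delta=O(1)$. The only mildly delicate point is the uniform lower bound on $|\mathcal{C}|$, which follows immediately from the rainbow condition; beyond that, the argument is a straightforward independence calculation.
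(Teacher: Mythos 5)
Your proof is correct and follows essentially the same approach as the paper, relying on the same two key observations: that $|\mathcal{C}|\geq\alpha|E(H)|$ throughout Phase~I, and that the edge and color exposures for distinct $x\in S_w$ are mutually independent conditional on the history of the algorithm. The only difference is organizational: the paper first applies Chernoff to the number $X$ of vertices $x\in S_w$ with $L(w)\subseteq N_{G_1}(x)$ and then bounds the probability that none of those $X$ candidates receives a valid rainbow coloring, whereas you bundle (i) and (ii) into a single per-vertex event $E_x$ and take the product $(1-\Pr[E_x])^{|S_w|}$ directly, a mild streamlining that dispenses with the intermediate concentration step.
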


\begin{proof}
Let
$$X:=|\{v\in S_w \mid L(w)\subseteq N_{G_1}(v)|\}|.$$
Note that $X$ is the sum of i.i.d. indicator random variables $X_v$
(for all $v\in S_w$) for which $X_v=1$ iff $L(w)\subseteq
N_{G_1}(v)$. Clearly, we have that (recall that $|L(w)|\leq d$)
$$\mathbb{E}[X]\geq s q^{d} \geq \frac{\alpha n}{(4 \Delta \log n)^2} \cdot \Omega\left(\frac{\log^5
n}{n}\right)=\Omega(\log^3 n).$$ Applying Chernoff's bound we obtain
that
$$\Pr[X \le \mathbb{E}[X]/2]=e^{- \Omega(\log^3 n)} = o(1/n).$$
Now, note that $|\mathcal{C}|\geq \alpha |E(H)|$ during Phase I.
Thus, the probability that for a vertex $x\in S_w$ with $L(w)\subseteq
N_{G_1}(x)$, all the edges to $L(w)$ have different colors from
$\mathcal{C}$ is at least
$$\frac{\binom{\mathcal{C}}{\ell}}{\left((1 + \alpha)|E(H)|\right)^{\ell}}\geq \left(\frac{\alpha |E(H)|}{(1 + \alpha)|E(H)| \ell}\right)^{\ell} \geq \left( \frac{\alpha}{(1 + \alpha)d} \right)^d=:\gamma>0,$$
where $|L(w)|=\ell$. Therefore, if $X\geq \mathbb{E}[X]/2$ then the probability that
there is no such $x$ is at most
$$ \left(1 - \gamma \right)^{|X|} \leq e^{-\gamma |X|}= e^{- \Omega(\log^3 n)} = o(1/n).$$
\end{proof}

Note that since we embed at most $n$ vertices, applying the union
bound we obtain that for every vertex $w_i$ there exists a ``good"
vertex $x \in S_w$. Now, if $W$ is as in $(1)$ (that is, all the
vertices in $W$ are isolated in $H$), then we are done. Otherwise, we
continue to Phase II.

%
%

{\bf Phase II:} Let $V^*:=V(G)\setminus f(V(H)\setminus W)$. Our goal is to extend $f$ with a valid embedding of $W$ into $V^*$,
using edges of $G_2$, in such a way that the resulting embedding is
rainbow.

For $w\in W$ let $L(w):=f(N_H(w))$ and let $\mathcal{L} = \left\{L(w)\middle|w\in W\right\}$. Recall that $W$ is
$2$-independent and thus all the $L(w)$'s are disjoint. 
Let $F=(\mathcal{L}\cup V^*, E_F)$ with edge set
$$E_F:= \left\{ Lv  \mid L\in\mathcal{L},  v\in V^* \text{ and } \forall_{u\in L} uv \notin E(G_1) \right\}$$
be the ground graph to build a bipartite auxiliary graph $\mathcal{B}(\mathcal{L}, V^*)$. Edges that appeared in $G_1$ are excluded since we can not color them again. Note that $|\mathcal{L}|=|W|=|V^*|$ and that by the following very rough estimate $F$ satisfies w.h.p.\ the conditions of Lemma~\ref{lem:matching}.

 \begin{claim}
 It holds with high probability that $\delta(F)\geq \frac{3}{4}|V^*|$.
 \end{claim}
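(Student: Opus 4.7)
The plan is to reduce the claim to a standard Chernoff estimate on the maximum degree of $G_1$. Since $G_1 \sim \mathcal{G}(n,q)$ with $q = \Theta(p)$ and $nq \to \infty$, Lemma~\ref{Che} gives $\Pr[|N_{G_1}(v)| > 2nq] = o(1/n)$ for each fixed vertex $v$, so a union bound yields the event $\mathcal{E} := \{\Delta(G_1) \le 2nq\}$ with high probability. On $\mathcal{E}$ I will verify the required minimum-degree bound \emph{deterministically}, regardless of what partial embedding $f$, residual set $V^*$, or family $\mathcal{L}$ is produced by Phase~I.

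For a fixed $v \in V^*$, a set $L \in \mathcal{L}$ fails to be adjacent to $v$ in $F$ precisely when some $u \in L$ satisfies $uv \in E(G_1)$. Each vertex $u = f(w')$ with $w' \in V(H)\setminus W$ appears in at most $\Delta$ of the sets $L(w) = f(N_H(w))$, namely those with $w \in W \cap N_H(w')$, so
$$|\mathcal{L}| - \deg_F(v) \;\le\; \Delta \cdot |N_{G_1}(v)| \;\le\; 2\Delta\, nq.$$
With $|\mathcal{L}| = |W| = \lceil \alpha n / (5 \log^2 n) \rceil$ and $\Delta nq = O(\Delta\, n^{1-1/d} \log^{5/d} n) = o(n/\log^2 n)$ this yields $\deg_F(v) \ge (1-o(1))|\mathcal{L}| \ge \tfrac{3}{4}|V^*|$ for $n$ large. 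Symmetrically, a vertex $v \in V^*$ is non-adjacent to $L \in \mathcal{L}$ in $F$ only if $uv \in E(G_1)$ for some $u \in L$, so $|V^*| - \deg_F(L) \le \sum_{u\in L}|N_{G_1}(u)| \le 2d\,nq = o(|V^*|)$, hence $\deg_F(L) \ge \tfrac{3}{4}|V^*|$ as well.

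Combining these two bounds establishes $\delta(F) \ge \tfrac{3}{4}|V^*|$ on $\mathcal{E}$, which is everything that is needed. I do not anticipate a genuine obstacle; the paper itself flags the estimate as ``very rough'', and the only asymptotic comparison to verify is $d\,nq \log^2 n = o(n)$, which holds for $p = n^{-1/d}\log^{5/d} n$ because $n^{1/d}$ easily dominates $d\log^{2+5/d} n$. The only conceptual point worth spelling out is that Phase~I's adaptive exposure of $G_1$-edges does not harm the argument: the Chernoff bound on $\Delta(G_1)$ is an unconditional statement about $G_1$, and given that bound, $\deg_F(v)$ and $\deg_F(L)$ are controlled by it through the multiplicity-$\Delta$ (resp.\ $\le d$) incidence structure of $\mathcal{L}$, no matter what Phase~I happens to output.
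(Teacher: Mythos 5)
Your proof is correct and follows the same route as the paper: condition on the high-probability event $\Delta(G_1)\le 2nq$, then deterministically bound both $\deg_F(L)$ (via $|L|\le\bar d\le d$) and $\deg_F(v)$. The only cosmetic difference is the multiplicity factor $\Delta$ in your bound $|\mathcal{L}|-\deg_F(v)\le\Delta|N_{G_1}(v)|$: since $W$ is $2$-independent, the sets $L(w)=f(N_H(w))$ are pairwise disjoint, so each $u\in N_{G_1}(v)$ can rule out at most \emph{one} $L\in\mathcal{L}$ and the paper's ``similar argument'' uses multiplicity $1$ rather than $\Delta$; your overcount is harmless because $\Delta$ is a constant here, and your explicit remark that the Chernoff bound on $\Delta(G_1)$ is unconditional (hence unaffected by Phase~I's adaptive exposure) is a useful clarification that the paper leaves implicit.
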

 \begin{proof}
For every $L\in\mathcal{L}$ and $v\in V^*$ the edge $Lv\notin E_F$ if and only if there exists $u\in L$ for which $uv\in E(G_1)$. Since $G_1\sim G(n,q)$, by applying Chernoff's bound it follows that w.h.p.\ $\Delta(G_1)\le 2nq$. Moreover, since for every $L\in\mathcal{L}$ we have that $|L|\le \bar{d}$, it follows that $deg_F(L)\geq |V^*|-\bar{d}2nq>3|V^*|/4$. A similar argument shows that we have $deg_F(v)\geq 3|V^*|/4$ for every $v\in V^*$.
 \end{proof}

In the following we describe a random process that tries to create a bipartite graph $\mathcal{B}(\mathcal{L},W)\in \mathcal{B}_{\lceil \log^2n\rceil- out}^{\ell}(F)$ by exposing edges from $G_2\setminus G_1$ and randomly color them.
First, let
$$\mathcal{C}:=\left\{col \in [c] \mid \exists\{u,v\}\in E(H\setminus W) \text{ s.t. } \{f(u), f(v)\} \text{ has color $col$}\right\} $$
and note that $|\mathcal{C}|\geq \alpha |E(H)|$.
Choose an arbitrary ordering $L_1, \dots, L_{|\mathcal{L}|}$ of the elements in $\mathcal{L}$. Then, in step $1\le i\le |\mathcal{L}|$, set $N_i:=N_F(L_i)$ and create $\lceil\log^2 n\rceil$ edges from $L_i$ to vertices in $N_i$ as follows: as long as $|N_{\mathcal{B}(\mathcal{L}, V^*)}(L_i)|<\lceil \log^2n\rceil$, iteratively pick a vertex $v\in N_i$ uniformly at random, set $N_i:=N_i\setminus\{v\}$ and expose all edges from $v$ to vertices in $L_i$ and color them uniformly at random with colors from $[c]$ (note that here the process can fail if at some point $N_i=\emptyset$ while $|N_{\mathcal{B}(\mathcal{L}, V^*)}(L_i)|<\lceil \log^2n\rceil$). If all the edges are contained in $G_2$ and if they have distinct colors that are all from the set of available colors $\mathcal{C}$, add $L_iv$ to $\mathcal{B}(\mathcal{L}, V^*)$.
At the end of step $i$ remove all the colors used at edges incident to $L_i$,
$$\mathcal{C}:=\mathcal{C}\setminus \left\{col\in [c]\mid\exists u\in L_i, \exists v\in N_{\mathcal{B}(\mathcal{L}, V^*)}(L_i) \text{ s.t. } uv \text{ has color } col\right\}.$$

If the process succeeds then every matching $M$ in $\mathcal{B}(\mathcal{L}, V^*)$ is clearly rainbow in the sense that all edges in
$$\left\{uv\mid \exists Lv\in M \text{ s.t. } u\in L \right\}$$
have distinct colors that have not been used in the embedding in Phase I. It follows from Claim~\ref{claim3:rainbow} below and Lemma~\ref{lem:matching} that the process succeeds w.h.p.\ and that the constructed $\mathcal B(\mathcal L,V^*)$ contains a perfect matching. Finally,  such a perfect matching in $\mathcal B(\mathcal L,V^*)$
extends $f$ into a rainbow embedding of $H$ in $G$. The following claim therefore completes the proof of Theorem~\ref{rainbow}.

\begin{claim}\label{claim3:rainbow}
The random process that creates $\mathcal{B}(\mathcal{L}, V^*)$ in Phase II succeeds w.h.p.\ and it samples uniformly at random from $\mathcal{B}_{\lceil\log^2n\rceil-out}^{\ell}(F)$.
\end{claim}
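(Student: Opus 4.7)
The plan is to separate two independent issues: (a) the random process in Phase~II does not abort, i.e., no set $N_i$ is exhausted before $\lceil\log^2 n\rceil$ edges have been appended to $\mathcal{B}(\mathcal{L},V^*)$, and (b) conditional on not aborting, the resulting $\mathcal{B}(\mathcal{L},V^*)$ is uniformly distributed on $\mathcal{B}^{\ell}_{\lceil\log^2 n\rceil\text{-out}}(F)$. The crucial structural observation is that within a single step~$i$ the color set $\mathcal{C}$ is frozen (it is updated only at the end of the step) and the edges queried at each trial, namely $\{uv: u\in L_i\}$ for the freshly chosen $v$, have never been exposed before: the $L_j$'s are pairwise disjoint (as $W$ is $2$-independent and $f$ is injective) and Phase~I exposed only edges of $G_1$, which is independent of $G_2$. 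Hence, conditional on the history up to step~$i$, the trials in step~$i$ are i.i.d.\ Bernoulli experiments with a common success probability $\rho_i$.

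For part~(a), I would first lower-bound $\rho_i$. A trial succeeds iff all $|L_i|\le d$ edges from $v$ to $L_i$ lie in $G_2$, which has probability $q^{|L_i|}\ge q^d$, and their random colors are pairwise distinct and contained in $\mathcal{C}$. By \eqref{eq:few_final_edges} the total number of edges ever added to $\mathcal{B}(\mathcal{L},V^*)$ is at most $\lceil\log^2 n\rceil\cdot|E(W,V\setminus W)|<\alpha|E(H)|/2$, so $|\mathcal{C}|\ge\alpha|E(H)|/2$ is maintained throughout Phase~II; therefore the color event occurs with probability at least $(\alpha/(2(1+\alpha)))^d = \Omega_{\alpha,d}(1)$. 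Combined with $q\ge p/2=\Omega(n^{-1/d}\log^{5/d}n)$ this yields $\rho_i=\Omega(q^d)=\Omega(\log^5 n/n)$. Collecting $k:=\lceil\log^2 n\rceil$ successes requires on expectation $k/\rho_i=O(n/\log^3 n)$ trials, and Lemma~\ref{thm:chernoff} applied to the number of successes in $2k/\rho_i$ trials shows that step~$i$ aborts with probability at most $\exp(-\Omega(\log^2 n))=n^{-\omega(1)}$. Since the preceding claim ensures $|N_F(L_i)|\ge \tfrac{3}{4}|V^*|=\Theta(n/\log^2 n)$, well above $2k/\rho_i$, there are always enough candidates to examine. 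A union bound over the $|\mathcal{L}|\le n$ steps finishes part~(a).

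For part~(b), I condition on the outcome of Phase~I and of steps $1,\dots,i-1$ of Phase~II; this fixes both $F$ and $\rho_i$. Within step~$i$, each vertex is drawn uniformly from the untried elements of $N_F(L_i)$, so the order in which $N_F(L_i)$ is examined is a uniformly random permutation, while the trial outcomes are i.i.d.\ Bernoulli$(\rho_i)$ depending on the vertex labels only through symmetric (uniform) randomness of $G_2$ and the coloring. Hence, for any two $k$-subsets $S,S'\subseteq N_F(L_i)$, transposing their labels is measure-preserving for the whole procedure inside step~$i$, which gives $\Pr[N_{\mathcal{B}}(L_i)=S]=\Pr[N_{\mathcal{B}}(L_i)=S']$; thus $N_{\mathcal{B}}(L_i)$ is uniform on $\binom{N_F(L_i)}{k}$ conditionally on step~$i$ succeeding. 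As this conditional distribution does not depend on $S_1,\dots,S_{i-1}$, the neighborhoods chosen for distinct $L_i$'s are independent, which is precisely the uniform distribution on $\mathcal{B}^{\ell}_{k\text{-out}}(F)$.

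The main obstacle is the bookkeeping in part~(a): one must verify simultaneously that the per-trial success probability is large enough for $k$ successes to appear quickly, \emph{and} that the color budget is never exhausted. The latter hinges on the inequality \eqref{eq:few_final_edges}, which was engineered precisely for case~(2) of the partition of $V(H)$, while the former turns on the assumption $p\ge n^{-1/d}\log^{5/d}n$ that pushes $q^d$ safely above $\log^2 n / |N_F(L_i)|$.
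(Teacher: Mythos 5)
Your proposal is correct and follows essentially the same route as the paper: within a fixed step $i$ the trials are i.i.d.\ Bernoulli (because the $L_j$ are pairwise disjoint, $G_2$ is fresh relative to $G_1$, and $\mathcal{C}$ is frozen during the step), the available-color count is kept at $\geq\alpha|E(H)|/2$ via \eqref{eq:few_final_edges}, a Chernoff bound gives $o(1/n)$ failure probability per step, and uniformity follows from exchangeability together with the uniformly random order in which $N_F(L_i)$ is examined. The only cosmetic difference is that you fold the paper's two-stage Chernoff argument (first for $|X_i|$, then conditionally for $|Y_i|$) into a single Bernoulli($\rho_i$) process and count successes in a prefix of length $2k/\rho_i$ rather than in all of $N_F(L_i)$, which additionally requires (and you verify) that $|N_F(L_i)|$ dominates that prefix length.
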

\begin{proof}
Note first that the process can only fail if in some round $1\le i\le |\mathcal{L}|$ we have that $N_i=\emptyset$ and $|N_{\mathcal{B}(\mathcal{L}, V^*)}(L_i)|<\lceil \log^2n\rceil$. It therefore suffices to show that in a fixed step $1\le i\le |\mathcal{L}|$ the process creates with probability $1-o(1/n)$ the $\lceil \log^2 n\rceil$ required edges.
Let
$$X_i:=\{v\in N_{F}(L_i) \mid L_i\subseteq N_{G_2}(v)|\}.$$
Note that $|X_i|$ is the sum of i.i.d. indicator random variables $X_{i,v}$
(for all $v\in N_{F}(L_i)$) for which $X_{i,v}=1$ iff $L_i\subseteq
N_{G_2}(v)$. Clearly, we have that (recall that $|L_i|\leq \bar{d} \le d$)
$$\mathbb{E}[|X_i|]\geq |N_{F}(L_i)| q^{d} \geq \delta(F) \cdot \Omega\left(\frac{\log^5
n}{n}\right)\geq \frac{3}{4}\cdot \frac{\alpha n}{5\log^2 n}\cdot \Omega\left(\frac{\log^5
n}{n}\right)=\Omega(\log^3 n).$$ Applying Chernoff's bound we obtain
that
\begin{equation}
\label{eq:pr_x_i}\Pr[|X_i| \le \mathbb{E}[|X_i|]/2]=e^{- \Omega(\log^3 n)} = o(1/n).
\end{equation}
Next, let
$$Y_i:=\{ v\in X_i \mid \text{all edges in $E(L_i, v)$ have distinct colors from $\mathcal{C}$} \}.$$
Note that $|Y_i|$ is the sum of i.i.d. indicator variables $Y_{i,v}$ (for all $v\in X_i$) for which $Y_{i,v}=1$ iff all edges in $E(L_i, v)$ have distinct colors from $\mathcal{C}$. Since we have by \eqref{eq:few_final_edges} that $|E(W, V\setminus W)|\le \alpha |E(H)|/(2\lceil\log^2 n\rceil)$ and we remove for each edge in $E(W, V\setminus W)$ at most $\lceil \log^2 n\rceil$ colors from $\mathcal{C}$,  the number of available colors in $\mathcal{C}$ is always at least $\alpha|E(H)|/2$.
Thus, the probability that for a vertex $v \in X_i$ all the edges to $L_i$ have different colors from
$\mathcal{C}$ is at least
$$p_{i}=\frac{\binom{\mathcal{C}}{\ell}}{\left((1 + \alpha)|E(H)|\right)^{\ell}}\geq \left(\frac{\alpha |E(H)| / 2}{(1 + \alpha)|E(H)| \ell}\right)^{\ell} \geq \left( \frac{\alpha}{(1 + \alpha)2\bar{d}} \right)^{\bar{d}}=:\gamma>0,$$
where $|L_i|=\ell\le \bar{d}$, and this lower bound for $p_{i}$ holds independently of all other color assignments in previous steps.
Therefore, if $|X_i|\geq \mathbb{E}[|X_i|]/2$, then the expectation of $|Y_i|$ is at least
$$\mathbb{E}[|Y_i|]\geq |X_i|\cdot \gamma=\Omega(\mathbb{E}[|X_i|])=\Omega(\log^3 n)$$
and it follows from Chernoff's bound that
\begin{equation}
\label{eq:p_y_i}
\Pr\left[|Y_i|<\frac{\mathbb{E}[|Y_i|]}{2}\middle| |X_i|\geq \frac{\mathbb{E}[|X_i|]}{2}\right]  =e^{- \Omega(\log^3 n)} = o(1/n).
\end{equation}
Combining \eqref{eq:pr_x_i} and \eqref{eq:p_y_i} we conclude that the probability that our process fails is at most
$$\sum_{i=1}^{|\mathcal{L}|}\Pr\left[|Y_i|\le \lceil \log^2 n\rceil\right]\le |\mathcal{L}| \cdot o(1/n) =o(1).$$
Finally, since we choose a random ordering of the neighbors of $L_i$, every $\lceil \log^2n\rceil$-tuple of neighbors of $L_i$ has the same probability to be part of $\mathcal{B}(\mathcal{L}, V^*)$ and the process therefore samples an element of $\mathcal{B}_{\lceil\log^2n\rceil- out}^{\ell}(F)$ uniformly at random.
\end{proof}

\end{proof}

{\bf Acknowledgment.} The first author is grateful to Michael
Krivelevich for pointing out the problem of finding a rainbow
embedding in random graphs, and to Benny Sudakov for many helpful
and valuable conversations. The authors are also grateful to Peter
Allen for giving some useful comments on an earlier draft. Last but not least, the authors are grateful to the anonymous referees for many valuable comments.

\bibliographystyle{amsplain}
\bibliography{refs}

\end{document}